\definecolor{shadecolor}{gray}{0.80}
\theoremstyle{plain}
\newtheorem{thm}{Theorem}[section]
\newtheorem*{thm*}{Theorem}
    \let\c@equation\c@thm
\newtheorem*{cor*}{Corollary}
\newtheorem{prop}[thm]{Proposition}
\newtheorem*{prop*}{Proposition}
\newtheorem{lem}[thm]{Lemma}
\newtheorem*{lem*}{Lemma}
\theoremstyle{definition}
\newtheorem{dfn}[thm]{Definition}
\newtheorem*{dfn*}{Definition}
\newtheorem{rem}[thm]{Remark}
\newtheorem*{rem*}{Remark}
\newtheorem*{ex*}{Example}
\newtheorem*{cnst*}{Construction}
\newtheorem*{pers*}{Perspective}
\newtheorem*{dig*}{Digression}
\newtheorem{claim}[thm]{Claim}
\newtheorem*{claim*}{Claim}
\title{Suslin's moving lemma with modulus}
\author{Wataru Kai
	\thanks{Graduate School of Mathematical Sciences, the University of Tokyo.
	JSPS Research Fellow (JSPS KAKENHI Grant Number 15J02264).
	Supported by the Program for Leading Graduate 
	Schools, MEXT, Japan.
	\newline
	\textit{E-mail address}: kaiw@ms.u-tokyo.ac.jp}, 
Hiroyasu Miyazaki
	\thanks{Graduate School of Mathematical Sciences, the University of Tokyo.
	JSPS Research Fellow (JSPS KAKENHI Grant Number 15J08833).
	Supported by the Program for Leading Graduate Schools, MEXT, Japan.
	\newline
	\textit{E-mail address}: miyazaki@ms.u-tokyo.ac.jp} }
\begin{document}

\maketitle

\begin{abstract}
The moving lemma of Suslin states that a cycle on $X\times \mathbb{A} ^n$ meeting all faces properly can be moved so that it becomes equidimensional over $\mathbb{A}^n$. This leads to an isomorphism of motivic Borel-Moore homology and higher Chow groups.

In this short paper we formulate and prove a variant of this. It leads to an isomorphism of Suslin homology with modulus and higher Chow groups with modulus, 
in an appropriate pro setting.

\end{abstract}

\section{Introduction}

Suslin \cite{Suslin} has proved roughly that a cycle on $X\times \mathbb{A} ^n$ meeting all faces properly can be moved so that it becomes equidimensional over $\mathbb{A}^n$. Here $X$ is an affine variety over a base field $k$.
As a consequence he obtains that the inclusion ($r\ge 0$)
\[ z^{\mathit{equi}}_r(X,\bullet )\hookrightarrow z_r(X,\bullet ) \]
of the cycle complex of equidimensional cycles into Bloch's cycle complex is a quasi-isomorphism.

Recently the context has been extended to cycles with modulus by Binda-Kerz-Saito \cite{KS, BS} and Kahn-Saito-Yamazaki \cite{KSY}. The reader finds the definitions below. 
There is an obvious injection ($r\ge 0$)
\begin{equation*}\label{inc} z^{\mathit{equi}}_r (\overline{X}|Y,\bullet ) \hookrightarrow z_r(\overline{X}|Y, \bullet ) \end{equation*}
for each pair $(\overline{X},Y)$ consisting of a finite type $k$-scheme $\overline{X}$ and an effective Cartier divisor $Y$ on it. We usually write $X:=\overline{X}\setminus Y$.

\

In this paper
we prove a variant of Suslin's moving lemma which takes the modulus condition into account (Theorem \ref{suslin-modulus} below). 
Our version of Suslin's moving lemma implies the following:
\begin{thm}[Theorem \ref{pro-comparison}]
Suppose $\overline{X}$ is affine and $X$ is an open set of $\overline{X}$ such that $\overline{X}\setminus X$ is the support of an 
effective Cartier divisor $Y$. Let $r\ge 0$.
Then the inclusions ($m\ge 0$)
\[ z^{\mathit{equi}}_r (\overline{X}|mY,\bullet )\subset z_r(\overline{X}|mY,\bullet ) \]
induce an isomorphism of abelian groups
\[ \varprojlim _{m }  
H_n(z^{\mathit{equi}}_r (\overline{X}|mY,\bullet ))\cong 
\varprojlim _{m}
\mathrm{CH}_r(\overline{X}|mY,n). \]
\end{thm}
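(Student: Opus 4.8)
The strategy is to deduce the comparison from the modulus version of Suslin's moving lemma, Theorem \ref{suslin-modulus}, exactly as the classical comparison of Borel--Moore motivic homology with higher Chow groups is deduced from Suslin's original moving lemma, but carried out in the pro-category $\mathrm{pro}\text{-}(\text{complexes})$ indexed by the modulus multiplier $m$. First I would recall that for each fixed $m\ge 0$ the inclusion $\iota_m\colon z^{\mathit{equi}}_r(\overline X|mY,\bullet)\hookrightarrow z_r(\overline X|mY,\bullet)$ is a map of complexes, and that these are compatible with the restriction maps $z_r(\overline X|mY,\bullet)\to z_r(\overline X|m'Y,\bullet)$ for $m\ge m'$ coming from the inclusion of modulus conditions; hence $\{\iota_m\}_m$ is a morphism of pro-complexes. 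The claim is that it is a pro-quasi-isomorphism in each homological degree $n$, i.e. induces an isomorphism on $\varprojlim_m H_n$; since the index category $\mathbb{N}$ is countable and cofiltered, and one only needs the $\varprojlim$ (not $\varprojlim^1$) statement, it suffices to produce, for every $m$, some $m'\ge m$ and maps in both directions between the relevant homology groups at levels $m$ and $m'$ that are mutually inverse after passing far enough out in the tower.

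Concretely, I would argue as follows. Fix $m$. Suslin's moving lemma with modulus (Theorem \ref{suslin-modulus}) should say: given a cycle $\alpha\in z_r(\overline X|mY,\bullet)$ (meeting all faces properly, with the modulus condition for $mY$), there is a homotopy, internal to cycles satisfying an appropriate \emph{enlarged} modulus condition $m'Y$ with $m'=m'(m)$, moving $\alpha$ to an equidimensional cycle; and the construction is natural enough to move not just individual cycles but to give a chain homotopy $h$ on the whole complex from the identity to a retraction landing in $z^{\mathit{equi}}_r$, after enlarging the modulus from $m$ to $m'$. Granting this, the composite $z_r(\overline X|mY,\bullet)\xrightarrow{\ \text{move}\ } z^{\mathit{equi}}_r(\overline X|m'Y,\bullet)\hookrightarrow z_r(\overline X|m'Y,\bullet)$ is chain-homotopic to the restriction map $z_r(\overline X|mY,\bullet)\to z_r(\overline X|m'Y,\bullet)$, and likewise on the equidimensional complexes the composite in the other order is homotopic to restriction. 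This exhibits $\iota$ as an isomorphism in $\mathrm{pro}$-$D(\mathbf{Ab})$, and in particular $\varprojlim_m H_n(\iota_m)$ is an isomorphism.

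The one point needing care is the bookkeeping of the modulus bound: one must check that the function $m\mapsto m'(m)$ produced by Theorem \ref{suslin-modulus} is such that the restriction maps and the moving maps interlace into a genuine morphism of towers with a homotopy-inverse tower — i.e. that the two towers $\{z_r(\overline X|mY,\bullet)\}_m$ and $\{z^{\mathit{equi}}_r(\overline X|mY,\bullet)\}_m$ become isomorphic pro-objects. This is formal once one knows (a) the homotopies constructed in the moving lemma are compatible with further enlarging the modulus, and (b) the restriction maps are compatible with the moving construction up to the same kind of homotopy; both should be read off from the explicit nature of the moving homotopy (translation/linear-deformation of cycles on $\overline X\times\mathbb A^n$), which only shrinks, never violates, the modulus inequality when $m$ is increased. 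I expect the main obstacle to be precisely verifying that the chain homotopy in the moving lemma respects the modulus condition after the prescribed enlargement and is natural in the simplicial variable, so that it assembles into a map of pro-complexes rather than merely a family of cycle-level statements; the passage to $\varprojlim_m H_n$ itself is then immediate, and no $\varprojlim^1$ issue arises because we only assert an isomorphism of the inverse limits, not of the full pro-objects' cohomology. Finally I would remark that affineness of $\overline X$ is used exactly where it is used in Suslin's original argument (to have enough functions to perform the moving), and that the hypothesis ``$\overline X\setminus X$ is the support of an effective Cartier divisor $Y$'' guarantees the complexes $z_r(\overline X|mY,\bullet)$ are defined and that enlarging $m$ makes sense.
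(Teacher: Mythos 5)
Your overall strategy (interlacing the two towers via the moving lemma with a controlled enlargement of the modulus, and observing that only $\varprojlim_m H_n$ is needed, so no $\varprojlim^1$ issue arises) is the right one, but there is a genuine gap at the central step: you assert that the moving construction is ``natural enough \dots to give a chain homotopy $h$ on the whole complex from the identity to a retraction landing in $z^{\mathit{equi}}_r$''. No such global map exists, and the paper never constructs one. In Theorem \ref{suslin-modulus} the system $\Phi^\bullet$ is chosen \emph{generically with respect to one fixed closed subset $V$} (the elements $F_i$ in Suslin's construction must avoid a bad locus depending on $V$); since $z_r(\overline X|mY,n)$ has infinitely many generators, one cannot pick a single $\Phi^\bullet$ whose pullback sends every admissible cycle to an equidimensional one with the smaller modulus. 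Consequently there is no morphism of pro-complexes $z_r\to z^{\mathit{equi}}_r$ to serve as a homotopy inverse, and no isomorphism in $\mathrm{pro}$-$D(\mathbf{Ab})$ obtained the way you describe.

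The paper's workaround is the notion of \emph{weak} homotopy: for each finite, face-closed family of cycles one restricts to the finitely generated subcomplex it generates inside ${}_\Phi\underline z_r^-(\overline X|Y,\bullet)$, where the $\Phi$-pullback is defined and is homotopic to the inclusion via $\sigma(V)=(\tilde\Phi^n)^*(V\times\mathbb A^1)$ (Lemma \ref{weakhomotopy}); one then argues element by element (Lemma \ref{zeromaps}). Every class in $H_nD^{2nY}_\bullet$ restricts, at modulus $Y$, into the image of $H_nC^Y_\bullet$, which kills the cokernel; and every equidimensional cycle $V=dW$ bounding in $D^{(2n+2)Y}_\bullet$ bounds in $C^Y_{\bullet}$ after restriction, via the identity $d((\Phi^{n+1})^*-\mathrm{incl.})W=-d(\tilde\Phi^n)^*(dW\times\mathbb A^1)$, which kills the kernel. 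Eventual vanishing of the transition maps on $\ker(H_nf^Y)$ and $\mathrm{coker}(H_nf^Y)$ is exactly a pro-isomorphism on $H_n$, and hence gives the $\varprojlim$ statement, without ever producing an inverse tower of complexes. Two smaller omissions in your write-up: the passage between the normalized complexes $z_r$ and the non-normalized $\underline z_r$ (handled by the splitting of Remark \ref{degn}), and the reduction to an infinite base field, which is needed for the generic choices in the moving lemma.
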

Actually we can prove an isomorphism of pro-abelian groups. We do not know if 
the inclusion is a quasi-isomorphism before taking limits.

\

Now we recall the definitions.
We set $\square ^n:=(\mathbb{P}^1\setminus \{ \infty \} )^n=\mathrm{Spec}(k[y_1,\dots ,y_n])$ in this paper, contradicting some authors who prefer $1$ as the point at infinity. With this convention our computations look simpler.
We set a divisor on $(\mathbb{P}^1)^n$:
\[ F_n = \sum _{i=1}^n (\mathbb{P}^1)^{i-1}\times \{ \infty \} \times (\mathbb{P}^1)^{n-i} .\]
The {\it faces} of $\square ^n$ are $\{ y_i=0\} $, $\{ y_i=1 \}$ and their intersections.

\begin{dfn}[{\cite{BS}, \cite{KSY}}]
(1) Let $\underline{z}_r(\overline{X}|Y,n)$ be the group of $(r+n)$-dimensional cycles on $X\times \square ^n$ whose components $V$ meet faces of $\square ^n$ properly, and {\it have modulus $Y$}, i.e.:
\begin{quote}
Let $\overline{V}^N$ be the normalization of $\overline{V}\subset \overline{X}\times (\mathbb{P}^1)^n$, the closure of $V$. Let $\varphi _V\colon \overline{V}^N \to \overline{X}\times (\mathbb{P}^1)^n$ be the natural map. Then the inequality of Cartier divisors
\[ \varphi _V^{-1}(Y\times (\mathbb{P}^1)^n)\le \varphi _V^{-1}(\overline{X}\times F_n) \]
holds. (When $n=0$ the condition reads: the closure $\overline{V}\subset \overline{X}$ of $V$ is contained in $X$ i.e.~$V=\overline{V}$.)
\end{quote}
Let $\partial _{i,0}\colon \square ^{n-1}\hookrightarrow \square ^n$ be the embedding of the face $\{ y_i=0 \} $:
\[ \partial _{i,0}\colon (y_1,\dots ,y_{n-1})\mapsto (y_1,\dots ,\overset{i}{\check{0}},y_i,\dots ,y_{n-1}). \]
Define $\partial _{i,1}$ similarly.
The groups $\underline{z}_r(\overline{X}|Y,n)$ form a complex by the differentials
\[ \sum _{i=1}^n (-1)^{i}(\partial _{i,1}^*-\partial _{i,0}^*)\colon \underline{z}_r(\overline{X}|Y,n)\to \underline{z}_r(\overline{X}|Y,n-1).  \]

(2) Let $\underline{z}^{\mathit{equi}}_r(\overline{X}|Y,n)$ be the subgroup of $\underline{z}_r(\overline{X}|Y,n)$ consisting of cycles that are equidimensional over $\square ^n$ (necessarily of relative dimension $r$). 
\end{dfn}

\begin{rem}
The condition that $V$ has modulus $Y$ makes sense for any closed subset $V$ of ${X}\times \square ^n$. (In that case, normalization of a closed subset means the disjoint union of the normalizations of its reduced irreducible components.)
\end{rem}

\begin{dfn}
We define the {\it degenerate part} $\underline{z}_r(\overline{X}|Y,n)_{\mathrm{degn}} \subset \underline{z}_r(\overline{X}|Y,n)$ as the subgroup generated by the cycles of the form $(\mathrm{id}_{\overline{X}} \times \mathrm{pr}_i )^\ast (V)$, where $V \in \underline{z}_r(\overline{X}|Y,n-1)$ and $\mathrm{pr}_i : \square^n \to \square^{n-1}, (y_1 ,\dots ,y_n ) \mapsto (y_1 ,\dots ,y_{i-1},y_{i+1},\dots ,y_n )$ for some $i=1,\dots n$.
We also define the {\it degenerate part} $\underline{z}_r^{\mathit{equi}} (\overline{X}|Y,n)_{\mathrm{degn}} \subset \underline{z}_r^{\mathit{equi}} (\overline{X}|Y,n)$ in a similar way.
We set
\begin{align*}
z_r(\overline{X}|Y,n)&:=\underline{z}_r(\overline{X}|Y,n)/\underline{z}_r(\overline{X}|Y,n)_{\mathrm{degn}}, \\
z_r^{\mathit{equi}} (\overline{X}|Y,n)&:=\underline{z}_r^{\mathit{equi}} (\overline{X}|Y,n)/\underline{z}_r^{\mathit{equi}} (\overline{X}|Y,n)_{\mathrm{degn}} .
\end{align*}
Noting that the differentials $\partial_{i,0} ,\partial_{i,1}$ preserve degenerate parts, we can see that $z_r(\overline{X}|Y,n)$ and $z_r^{\mathit{equi}} (\overline{X}|Y,n)$ also form complexes.
We define the {\it higher Chow group with modulus} by
\begin{align*}
\mathrm{CH}_r (\overline{X}|Y,n) &:= H_n (z_r(\overline{X}|Y,\bullet )).
\end{align*}
\end{dfn}

\begin{rem}\label{degn}
The subgroups
\[ \underline{z}_r (\overline{X}|Y,n )_0 := \bigcap _{i=1}^n\ker (\partial ^*_{i,0})\subset \underline{z}_r(\overline{X}|Y,n) \]
form a subcomplex. One checks that the composite map 
\[
 \underline{z}_r (\overline{X}|Y,\bullet )_0 \hookrightarrow \underline{z}_r(\overline{X}|Y,\bullet ) \to z_r(\overline{X}|Y,\bullet )
\]
is an isomorphism of complexes.
Using this, we have a direct sum decomposition
\[ \underline{z}_r(\overline{X}|Y,\bullet )=z_r(\overline{X}|Y,\bullet )\oplus \underline{z}_r(\overline{X}|Y,\bullet )_{\mathrm{degn}}   \]
of a complex. We have a similar decomposition of $\underline{z}^{\mathit{equi}}_r(\overline{X}|Y,\bullet )$, and the inclusion $\underline{z}^{\mathit{equi}}_r(\overline{X}|Y,\bullet )\hookrightarrow \underline{z}_r(\overline{X}|Y,\bullet )$ is compatible with the decompositions.

\end{rem}

\section{Equidimensionality theorem}\label{sec-equidim}

Let $k$ be an infinite base field.
We will formulate and prove a variant of Suslin's equidimensionality theorem for modulus pairs $(\overline{X},Y)$ (i.e.~a $k$-scheme $\overline{X}$ equipped with an effective Cartier divisor $Y$) for which $\overline{X}$ is affine.

\subsection{Suslin's generic equidimensionality theorem (review)}

\begin{thm}[{\cite[Th.1.1]{Suslin}}]\label{suslin-original}
Assume that $\overline{X}$ is an affine scheme, $V$ is a closed subscheme in $\overline{X}\times \square ^n$ and $t$ is a nonnegative integer such that $\dim V \le n+t$. Assume further that $Z$ is an effective divisor in $\square ^n$ and $\varphi :\overline{X}\times Z\to \overline{X}\times \square ^n$ is any $\overline{X}$-morphism. Then there exists an $\overline{X}$-morphism $\Phi :\overline{X}\times \square ^n\to \overline{X}\times \square ^n$ such that
\begin{enumerate}[(1)]
\item $\Phi |_{\overline{X}\times Z}=\varphi $
\item Fibers of the projection $\Phi ^{-1}(V)\to \square ^n$ over points of $\square ^n\setminus Z$ have dimension $\le t$.
\end{enumerate}
\end{thm}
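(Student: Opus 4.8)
We will produce $\Phi$ by a genericity argument, essentially reproducing Suslin's construction. Write $A:=\Gamma(\overline X,\mathcal O_{\overline X})$, fix a closed embedding $\overline X\hookrightarrow\mathbb A^N$, and fix $f\in k[y_1,\dots,y_n]$ with $Z=\{f=0\}$ as effective divisors (possible since $k[y_1,\dots,y_n]$ is factorial). Fix also an $\overline X$-morphism $\widetilde\varphi\colon\overline X\times\square^n\to\overline X\times\square^n$ restricting to $\varphi$ on $\overline X\times Z$: just lift each $\varphi^*y_i\in A[y]/(f)$ to some $\widetilde\varphi^*y_i\in A[y]$. Then an $\overline X$-morphism $\Phi$ satisfies $\Phi|_{\overline X\times Z}=\varphi$ if and only if $\Phi^*y_i=\widetilde\varphi^*y_i+f\cdot g_i$ for some $g_i\in A[y]$, and only these $g_i$ are to be chosen. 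The plan is to take the $g_i$ to depend only on $y$, of bounded degree, with "generic" coefficients.

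Fix $D\ge 2n+t$ and consider the morphisms $\Phi_c$ defined by
\[ \Phi_c^*y_i=\widetilde\varphi^*y_i+f(y)\sum_{|\beta|\le D}c_{i,\beta}\,y^\beta , \]
where $c=(c_{i,\beta})$ ranges over an affine space $\mathbb A^M$, $M=n\binom{n+D}{n}$; note $\Phi_c|_{\overline X\times Z}=\varphi$ for every $c$. I would form the closed subscheme
\[ \mathcal V:=\{(x,y_0,c)\in\overline X\times(\square^n\setminus Z)\times\mathbb A^M\mid (x,\Phi_c(x,y_0))\in V\}, \]
with projection $\pi\colon\mathcal V\to(\square^n\setminus Z)\times\mathbb A^M$, whose fibre over $(y_0,c)$ is exactly the fibre of $\Phi_c^{-1}(V)\to\square^n$ over $y_0$. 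By upper semicontinuity of fibre dimension, $\mathcal B:=\{p\mid\dim\pi^{-1}(p)\ge t+1\}$ is closed. Let $q\colon(\square^n\setminus Z)\times\mathbb A^M\to\mathbb A^M$ be the projection. If $\overline{q(\mathcal B)}\ne\mathbb A^M$ then, $k$ being infinite, there is $c_0\in\mathbb A^M(k)$ outside it, and $\Phi:=\Phi_{c_0}$ has both required properties. Since $\overline{q(\mathcal B)}\ne\mathbb A^M$ is equivalent to the fibre of $\mathcal B$ over the generic point $\eta=\operatorname{Spec}k(c)$ of $\mathbb A^M$ being empty, it suffices to prove: \emph{for the generic choice of parameters (the $c_{i,\beta}$ algebraically independent over $k$) and every point $\bar y_0\in\square^n_{\overline{k(c)}}\setminus Z$, the fibre $\Phi_\eta^{-1}(V)_{\bar y_0}$ has dimension $\le t$.}

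To prove this I would base-change to $\overline{k(c)}$ and intersect successively. Since the $g_i$ are constant in $x$, $\lambda_i:=\sum_\beta c_{i,\beta}\bar y_0^\beta\in\overline{k(c)}$ is a constant, so $\Phi_\eta^{-1}(V)_{\bar y_0}=V_{\overline{k(c)}}\cap H_1\cap\dots\cap H_n$ with $H_i=\{y_i=\rho_i(x)+f(\bar y_0)\lambda_i\}$, where $\rho_i:=\widetilde\varphi^*y_i(-,\bar y_0)$ has coefficients in $k(\bar y_0)$ and $f(\bar y_0)\ne 0$ because $\bar y_0\notin Z$. One shows by induction on $i$ that $V\cap H_1\cap\dots\cap H_i$ has dimension $\le(n+t)-i$; as $\dim V\le n+t$, the case $i=n$ is what is wanted. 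For the inductive step it is enough to check that no irreducible component $W$ of $V\cap H_1\cap\dots\cap H_{i-1}$ lies in $H_i$, i.e. that $\bar y_i\ne\rho_i|_W+f(\bar y_0)\lambda_i$ in $\mathcal O(W)$. Here is the crucial transcendence-degree count: $H_1,\dots,H_{i-1}$ involve only the parameters $c_{l,\beta}$ with $l\le i-1$, so $\mathcal O(W)$ is finitely generated over the field $k(\bar y_0)(\{c_{l,\beta}\}_{l\le i-1})$, of transcendence degree $\le n+(i-1)\binom{n+D}{n}$ over $k$; hence $\operatorname{Frac}(\mathcal O(W))$ has transcendence degree $\le 2n+t+(i-1)\binom{n+D}{n}$ over $k$. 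If $\bar y_0\ne 0$ there are at least $D+1$ multi-indices $\beta$ with $\bar y_0^\beta\ne 0$ (take $\beta$ supported on one nonzero coordinate); comparing transcendence degrees, at least $D+1-(2n+t)\ge 1$ of the corresponding $c_{i,\beta}$ are transcendental over $\operatorname{Frac}(\mathcal O(W))$ (if $\bar y_0=0$, use $\beta=0$ directly). Fixing one such $\beta$ and writing $\rho_i|_W+f(\bar y_0)\lambda_i=\big[\rho_i|_W+f(\bar y_0)\!\sum_{\beta'\ne\beta}c_{i,\beta'}\bar y_0^{\beta'}\big]+f(\bar y_0)\bar y_0^\beta\,c_{i,\beta}$, the bracket lies in a ring not involving $c_{i,\beta}$ and $f(\bar y_0)\bar y_0^\beta\ne 0$, so the sum is transcendental over $\operatorname{Frac}(\mathcal O(W))$, hence $\ne\bar y_i$. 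Thus $W\not\subseteq H_i$ and the induction closes.

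The main obstacle is exactly this last step: one must organize the parameters -- here, by letting the $i$-th "batch" $\{c_{i,\beta}\}_\beta$ appear only in the $i$-th hyperplane $H_i$ -- so that after $i-1$ cuts the intermediate varieties still "do not see" enough of the parameters, leaving a genuinely fresh transcendental available for the next cut, and this uniformly over all base points $\bar y_0$, including ones whose coordinates are algebraically entangled with the parameters. Everything else -- the reduction to the generic point, the construction of $\mathcal V$ and $\mathcal B$, the final specialization -- is formal. (We have used that $k$ is infinite, matching the standing hypothesis of this section; Suslin's original argument is valid over an arbitrary base field.)
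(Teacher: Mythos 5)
There is a genuine gap here, and it is fatal rather than cosmetic: your correction terms $g_i$ depend only on the $\square^n$-coordinates $\underline y$, whereas in Suslin's construction (which the paper follows) the correction is $h(\underline y)F_i(\underline x)$ with $F_1,\dots,F_n$ generic homogeneous polynomials of large degree in the coordinates of the ambient affine space of $\overline X$. For a fixed $\bar y_0\in\square^n\setminus Z$, your map $x\mapsto\Phi_c(x,\bar y_0)$ is just $x\mapsto\widetilde\varphi(x,\bar y_0)$ followed by translation of the $\square^n$-factor by the \emph{constant} vector $f(\bar y_0)\bigl(g_1(\bar y_0),\dots,g_n(\bar y_0)\bigr)$; in particular, at any common zero of $g_1,\dots,g_n$ outside $Z$ (and for generic $c$ these exist) the fibre of $\Phi_c^{-1}(V)$ coincides with that of $\widetilde\varphi^{-1}(V)$, over which you have no control. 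Concretely: take $\overline X=\mathbb A^2$, $n=1$, $Z=\{y=0\}$, $V=\{(y-x_1)(y-x_1-1)=0\}$ (so $\dim V=2=n+t$ with $t=1$), $\varphi(x,0)=(x,x_1)$, $\widetilde\varphi^*y=x_1$. Then $\Phi^*y=x_1+y\,g(y)$ and the fibre over $\bar y_0\ne 0$ is all of $\mathbb A^2$ whenever $\bar y_0\,g(\bar y_0)\in\{0,1\}$; since $y\,g(y)-1$ (or $y\,g(y)$, if $g=0$) always has a nonzero root, \emph{no} $y$-only correction satisfies condition (2), while the $x$-dependent correction $G=x_2^N$ does.

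The precise step of your transcendence argument that breaks is the sentence ``the bracket lies in a ring not involving $c_{i,\beta}$'': the coordinates of $\bar y_0$ are arbitrary elements of $\overline{k(c)}$ and may be algebraically entangled with the $i$-th batch of parameters (e.g.\ $\bar y_0$ a common root of the $g_l$), so that $c_{i,\beta}$, although transcendental over $\operatorname{Frac}(\mathcal O(W))$, can lie in $\operatorname{Frac}(\mathcal O(W))\left(\{c_{i,\beta'}\}_{\beta'\ne\beta}\right)$ --- indeed the whole sum $\sum_{\beta'}c_{i,\beta'}\bar y_0^{\beta'}$ can equal $0$. You correctly identified this entanglement as the main obstacle, but it is not overcome; it is exactly why the parameters must live on the $\overline X$-side. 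With Suslin's choice the perturbation is generic in the $x$-direction, so over \emph{each} $\bar y_0\notin Z$ the graph of $x\mapsto\Phi(x,\bar y_0)$ is a genuinely generic $n$-fold cut and the fibre of $V$ drops to dimension $t$, and the parameters, being coefficients of polynomials in $\underline x$, cannot be ``seen'' by the point $\bar y_0$. (Note also that the paper's later degree control in Theorem \ref{suslin-degree} relies on the correction being constant in $\underline y$ --- the exact opposite of your choice.) The surrounding formal apparatus of your argument (reduction to the generic point, semicontinuity, specialization) is fine, but the family of morphisms it is applied to is the wrong one.
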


\begin{proof}[Sketch of proof]
Note that the $\overline{X}$-morphisms $\varphi ,\Phi $ are determined by $n$ regular functions on $\overline{X}\times Z$ and $\overline{X}\times \square ^n$ respectively.

We can reduce the problem to the case $\overline{X}=\mathbb{A}^m$ as follows. Take any closed embedding $\overline{X}\hookrightarrow \mathbb{A}^m$ and regard $V$ as a subset of $\mathbb{A}^m\times \square ^n$. By the above observation, the given $\varphi $ can be extended to an $\mathbb{A}^m$-morphism $\varphi \colon \mathbb{A}^m\times Z\to \mathbb{A}^m\times \square ^n$.
Suppose we have found an $\mathbb{A}^m$-morphism $\Phi \colon \mathbb{A}^m\times \square ^n\to \mathbb{A}^m\times \square ^n$ with the desired properties for $\mathbb{A}^m$ and $V$. It restricts to an $\overline{X}$-morphism $\overline{X}\times \square ^n\to \overline{X}\times \square ^n$ and satisfies the desired properties for $\overline{X}$ and $V$.

From now on we assume $\overline{X}=\mathbb{A}^m$. Let $x_1,\dots ,x_m$ be the coordinates of 
$\mathbb{A}^m$ and $y_1,\dots ,y_n$
be the coordinates of $\square ^n$.
Let $h(\underline{y})$ be the defining equation of $Z\subset \square ^n$.

We are given a $\overline{X}$-morphism 
$\varphi :\overline{X}\times Z\to \overline{X}\times \square ^n$
i.e.~a $k$-morphism $\overline{X}\times Z \to \square ^n$. It corresponds to $k$-algebra homomorphisms
\[ \begin{array}{ccccc} 
k[y_1,\dots ,y_n]&\to & k[x_1,\dots ,x_m,y_1,\dots ,y_n]/(h(\underline{y})) 
\\%
y_i &\mapsto & f_i(\underline{x},\underline{y})\mod (h(\underline{y})) .
\end{array}\]
Suslin constructs the desired morphism $\Phi$ as the morphism corresponding to homomorphisms of the form
\[ \begin{array}{ccccc} 
k[y_1,\dots ,y_n]&\to & k[x_1,\dots ,x_m,y_1,\dots ,y_n] 
\\%
y_i &\mapsto &\Phi _i:=f_i(\underline{x},\underline{y})+h(\underline{y})F_i(\underline{x})
\end{array}\]
where $F_1(\underline{x}),\dots ,F_n(\underline{x})$
are appropriately chosen homogeneous polynomials of degree $N$ for a large $N$.

He has shown that if we take $N$ large enough, then a generic choice of $F_1, \dots ,F_n$ makes the equidimensionality condition true.
\end{proof}

\subsection{Suslin's equidimensionality theorem, with modulus}

Recall a {\it face} of $\square ^n=\mathrm{Spec}(k[y_1,\dots ,y_n])$ is a closed subscheme of the form $\{ y_i=0 \} , \{ y_i =1\} $ or an intersection of them. Put $\partial \square ^n=\cup _{\partial }\partial (\square ^{n-1})$ where $\partial \colon \square ^{n-1}\hookrightarrow \square ^n$ runs through embeddings of codimension $1$ faces. It is a closed subset defined by the equation $h(\underline{y})=y_1(1-y_1)\dots y_n(1-y_n)$.

We need the following version of Suslin's moving lemma where we control the degrees of the map $\Phi $.

\begin{thm}\label{suslin-degree}
Let $\overline{X}=\mathrm{Spec}(R)$ be an affine $k$-scheme and $V\subset \overline{X}\times \square ^n$ be a closed subset of dimension $n+t$ for some $t\ge 0$. 
Suppose given a morphism 
\[ \Phi '\colon \overline{X}\times \partial \square ^{n}\to \overline{X}\times \square ^{n} \] 
and
suppose there is an integer $d\ge 2$ such that 
for any codimension $1$ face $\partial \colon \square ^{n-1}\hookrightarrow \square ^n$, the composite $\Phi '\circ (\mathrm{id}_{\overline{X}}\times \partial )$
is defined by polynomials $\Phi '_{i,\partial }\in R[y_1,\dots ,y_{n-1}]$ ($1\le i\le n$) whose degrees with respect to $y_j$ are at most $d$ for each $j$.

Then we can find an $\overline{X}$-map
\[ \Phi ^n :\overline{X}\times \square ^n \to \overline{X}\times \square ^n \]
extending $\Phi '$
such that $(\Phi ^n)^{-1}(V )\subset \overline{X}\times \square ^n$ has fibers of dimension $\le t$ over $\square ^n\setminus \partial \square ^{n}$,
and moreover, the functions $\Phi ^n_i\in R[y_1,\dots ,y_n]$ defining $\Phi ^n$ ($1\le i\le n$) have degrees $\le d$ with respect to each $y_j$.
\end{thm}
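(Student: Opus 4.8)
The plan is to build $\Phi^n$ by the same Suslin ansatz as in the sketch above, but to choose the perturbation polynomials $F_i$ so as to simultaneously (a) achieve the generic equidimensionality over $\square^n\setminus\partial\square^n$, and (b) keep the $y_j$-degrees of the resulting $\Phi^n_i$ bounded by $d$, while still restricting correctly to the prescribed $\Phi'$ on the boundary. Write $\overline{X}=\mathrm{Spec}(R)$ and let $h(\underline y)=y_1(1-y_1)\cdots y_n(1-y_n)$ be the equation of $\partial\square^n$, so $\deg_{y_j}h=2$ for each $j$. The idea is to first produce \emph{some} $\overline{X}$-map $\Psi\colon \overline{X}\times\square^n\to\overline{X}\times\square^n$ extending $\Phi'$ (for instance by extending each boundary polynomial $\Phi'_{i,\partial}$ by a Chinese-remainder/interpolation argument over the $2n$ hyperplanes $\{y_j=0\},\{y_j=1\}$, which since $d\ge 2$ can be done with $\deg_{y_j}\Psi_i\le d$), and then look for $\Phi^n$ of the form
\[
\Phi^n_i \;=\; \Psi_i \;+\; h(\underline y)\,G_i(\underline x,\underline y),
\]
where each $G_i\in R[y_1,\dots,y_n]$ is chosen generically subject to $\deg_{y_j}G_i\le d-2$; then $\deg_{y_j}\Phi^n_i\le d$ automatically, and $\Phi^n|_{\overline X\times\partial\square^n}=\Psi|_{\overline X\times\partial\square^n}=\Phi'$ because $h$ vanishes on $\partial\square^n$.

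The heart of the matter is then a degree-bounded version of Suslin's generic-position argument: one must show that for a generic choice of the coefficients of the $G_i$ (as points of an affine space over $k$, using that $k$ is infinite), the fibers of $(\Phi^n)^{-1}(V)\to\square^n$ over $\square^n\setminus\partial\square^n$ have dimension $\le t$. First I would reduce to the case $\overline X=\mathbb A^m$ exactly as in the sketch of Theorem \ref{suslin-original}: embed $\overline X\hookrightarrow\mathbb A^m$, extend $\Phi'$ to an $\mathbb A^m$-map on $\mathbb A^m\times\partial\square^n$ with the same degree bounds (again by interpolation, using $d\ge 2$), view $V\subset\mathbb A^m\times\square^n$, solve the problem there, and restrict. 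Over $\mathbb A^m$ the polynomials $\Psi_i$ and $G_i$ live in $k[\underline x,\underline y]$, and the subtlety compared to Suslin is that we are not free to take the perturbation of arbitrarily large degree — we are confined to the finite-dimensional space $W:=\bigoplus_{i=1}^n\{G\in k[\underline x,\underline y]:\deg_{y_j}G\le d-2\ \forall j\}$.

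To run the generic-position argument inside $W$, I would pass to a point $y^0\in\square^n\setminus\partial\square^n$ with residue field a finite extension of $k$ (or work at the generic point of each component of a stratification, as Suslin does) and analyze the incidence variety $\{(y,\underline G)\in(\square^n\setminus\partial\square^n)\times W : \dim_y (\Phi^n_{\underline G})^{-1}(V)>t\}$, showing its projection to $W$ is not dominant. The key local computation is: at a fixed $\underline x^0\in\mathbb A^m$ and $\underline y^0$ with $h(\underline y^0)\ne 0$, as $\underline G$ varies over $W$ the value $\Phi^n_i(\underline x^0,\underline y^0)=\Psi_i(\underline x^0,\underline y^0)+h(\underline y^0)G_i(\underline x^0,\underline y^0)$ sweeps out all of $\mathbb A^n$ — indeed already the constant-in-$\underline y$ part of $G_i$ (allowed since $d-2\ge 0$) gives, after scaling by the nonzero constant $h(\underline y^0)$, a surjection $W\twoheadrightarrow\mathbb A^n$. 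Hence the evaluation map $W\times(\mathbb A^m\times(\square^n\setminus\partial\square^n))\to\mathbb A^n$, $(\underline G,\underline x,\underline y)\mapsto(\Phi^n_1,\dots,\Phi^n_n)$, is smooth and surjective on each fiber over $(\underline x,\underline y)$, which is exactly the input one needs to run the fiber-dimension estimate: $(\Phi^n_{\underline G})^{-1}(V)$ is the preimage of $V$ under a family of maps that is ``generically transverse'' to $V$ in the relevant sense, so a generic $\underline G$ works by a standard Bertini/Chevalley fiber-dimension count combined with the hypothesis $\dim V\le n+t$. This is the step I expect to be the main obstacle, since one must verify that restricting the perturbation to degree $\le d-2$ in each $y_j$ (rather than allowing a large homogeneous perturbation of degree $N$ as Suslin does) still leaves enough freedom — the point being that the freedom needed is only pointwise surjectivity onto $\mathbb A^n$ of the evaluation, which the constant part of $G_i$ already supplies, so the large-degree perturbation in Suslin's original proof was a convenience, not a necessity. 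Finally I would assemble the $\Phi^n$ over $\overline X$ by restriction and record the degree bounds, completing the proof.
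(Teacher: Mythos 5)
Your first step---replacing the boundary data by a representative with $y_j$-degrees $\le d$---matches the paper's Lemma~\ref{modify} (the paper does it with the explicit interpolation operator $f\mapsto f-\alpha_1\cdots\alpha_n f$, which is the same Chinese-remainder idea you describe). The divergence, and the genuine gap, is in the second step. You enlarge the perturbation space to all $G_i(\underline x,\underline y)$ with $\deg_{y_j}G_i\le d-2$ (note this space $W$ is not finite-dimensional: only the $\underline y$-degrees are bounded) and then propose to re-run the generic-position argument over it, asserting that pointwise surjectivity of the evaluation onto $\mathbb A^n$ is ``exactly the input one needs'' for a ``standard Bertini/Chevalley fiber-dimension count.'' That assertion is not justified, and it is where the whole difficulty of Theorem~\ref{suslin-original} lives: the conclusion is a bound on the fiber dimension over \emph{every} point of $\square^n\setminus\partial\square^n$, not just over a generic one, and passing from an incidence-variety dimension count to an all-fibers statement is the nontrivial content of Suslin's proof (in which the large degree $N$ of the forms $F_i(\underline x)$ is genuinely used --- but note that $N$ is a degree in the $\overline X$-direction, not in $\underline y$). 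You flag this step yourself as ``the main obstacle,'' and as written it is not carried out.

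Your own key observation already contains the fix, and it is exactly what the paper does: Suslin's perturbations are $h(\underline y)F_i$ with $F_i\in R$ depending only on the $\overline X$-coordinates (homogeneous of large degree in $\underline x$ after embedding $\overline X\hookrightarrow\mathbb A^m$), hence of $y_j$-degree exactly $2\le d$. So there is no tension between Suslin's genericity argument and the $\underline y$-degree bound in the statement: one applies Suslin's proof of Theorem~\ref{suslin-original} verbatim to the degree-$\le d$ representatives $f_i$ from the first step and reads off $\deg_{y_j}(f_i+h F_i)\le\max(d,2)=d$. No new generic-position argument, and no restriction of the perturbation space, is needed. If you insist on $\underline y$-dependent $G_i$ you must actually prove the degree-bounded genericity statement rather than invoke Bertini; restricting to the constant-in-$\underline y$ part of $W$ (which, as you note, already supplies the surjectivity) reduces you to Suslin's original setting and closes the gap.
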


\begin{proof}
The map $\Phi '$ is determined by $R$-coefficient polynomials $f_i(y_1,\dots ,y_n) \mod h(\underline{y})$ ($1\le i\le n$). If we substitute $y_j=0 $ or $y_j=1$ to $f_i$ we get a polynomial which has degree $\le d$ with respect to each $y_k$ by the hypothesis.

\begin{lem}\label{modify}
Let $d\ge 1$ be an integer.
Suppose given a polynomial
$f(y_1,\dots ,y_n)\in R[y_1,\dots ,y_n]$ such that for each $j$ if we substitute any of $y_j=0$ or $y_j=1$, the resulting polynomial has degree $\le d$ with respect to each $y_k$. Then $f\mod h(\underline{y})$ has a (unique) representative which has degree $\le d$ with respect to each $y_j$ (where we keep the notation $h(\underline{y}):=y_1(1-y_1)\cdots y_n(1-y_n)$). 
\end{lem}
\begin{proof}
For each $i$ denote by $y_i(-|_{y_i=1})$ the operator which sends a polynomial $f$ to $y_i\cdot (f|_{y_i=1})$ and define $(1-y_i)(-|_{y_i=0})$ similarly. Note that for different $i$ and $j$ the operators $y_i(-|_{y_i=1})$ and $y_j(-|_{y_j=1})$ commute (and similarly for other pairs). Put $\alpha _i:=1- y_i(-|_{y_i=1}) - (1-y_i)(-|_{y_i=0})$.
Then one can see the polynomial
\[ f-\left( \alpha _1 \dots \alpha _n f\right) \]
is the desired representative.
\end{proof}

By the previous lemma, we can replace representatives $f_i(\underline{y})$ so that they have degrees $\le d$ with respect to each $y_j$.

By Suslin's proof of Theorem \ref{suslin-original}, there are elements $F_i\in R$ such that if we define
$\Phi ^n$ by setting its components as ($1\le i\le n$)
\[ \Phi ^n_i(\underline{y}):=f_i(\underline{y})+h(\underline{y})F_i , \]
then the condition on fiber dimensions is satisfied.
Moreover, from this form, $\Phi ^n_i$ has degree $\le d$ with respect to each $y_j$.
This completes the proof of Theorem \ref{suslin-degree}.
\end{proof}


\begin{lem*}[{Containment Lemma, \cite[Prop.2.4]{KP}}]
Let $V\subset \overline{X}\times \square ^n$ be a closed subset which has modulus $Y$ and $V'\subset V$ be a smaller closed subset. Then $V'$ also has modulus $Y$.
\end{lem*}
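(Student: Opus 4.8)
The plan is to reduce the statement to a question about valuations on the normalization of $\overline{V}$ and then invoke functoriality of pullbacks of Cartier divisors along dominant maps of integral schemes. First I would observe that it suffices to treat the case where $V$ and $V'$ are both integral, since the modulus condition is checked component-by-component (by the convention recalled in the remark after the definition of modulus). So assume $V' \subset V$ are integral closed subsets of $X \times \square^n$, with closures $\overline{V'} \subset \overline{V}$ in $\overline{X}\times(\mathbb{P}^1)^n$. Let $\overline{V}^N \to \overline{V}$ and $\overline{V'}^N \to \overline{V'}$ be the normalizations, with structure maps $\varphi_V$ and $\varphi_{V'}$ to $\overline{X}\times(\mathbb{P}^1)^n$.

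The key geometric input is that the inclusion $\overline{V'} \hookrightarrow \overline{V}$ lifts, after normalizing, to a map on normalizations only in a restricted sense: instead I would work valuation-theoretically. The modulus inequality $\varphi_V^{-1}(Y\times(\mathbb{P}^1)^n) \le \varphi_V^{-1}(\overline{X}\times F_n)$ is an inequality of effective Cartier divisors on the normal scheme $\overline{V}^N$, hence can be checked at each codimension-one point, i.e. against each discrete valuation $v$ on the function field $k(\overline{V})$ coming from a prime divisor on $\overline{V}^N$. The statement to prove is the analogous inequality for the valuations coming from prime divisors on $\overline{V'}^N$. So the heart of the matter is: given a discrete valuation $w$ on $k(\overline{V'})$ centered on $\overline{V'}^N$, produce a discrete valuation $v$ on $k(\overline{V})$ centered on $\overline{V}^N$ such that $w$ restricted to functions pulled back from $\overline{X}\times(\mathbb{P}^1)^n$ is a positive multiple of what $v$ does to the same functions — equivalently, $w$ and $v$ induce the same (up to scaling) valuation on the subfield $k(\overline{X}\times(\mathbb{P}^1)^n) \cap$ (appropriate subring). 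Concretely: the composite $\overline{V'}^N \to \overline{V'} \hookrightarrow \overline{V}$ factors through $\overline{V}^N$ after passing to a suitable blow-up or normalization in $k(\overline{V'})$; more simply, one extends $w$ to a discrete valuation $v$ on the larger field $k(\overline{V})$ — possible since $k(\overline{V}) \hookrightarrow k(\overline{V'})$ makes $k(\overline{V'})$ an extension, wait, actually $k(\overline{V'})$ is not an extension of $k(\overline{V})$ since $V' \subset V$. Here is the correct move: take a valuation ring of $k(\overline{V'})$ dominating the local ring of $\overline{V'}^N$ at the center of $w$; pull it back along... no. The right statement is that there is a dominant map $\overline{V'}^N \to \overline{V}$, namely the composite with the closed immersion, which is \emph{not} dominant. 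So instead: the local ring $\mathcal{O}_{\overline{V},\overline{V'}}$ (localization at the generic point of $\overline{V'}$) surjects onto $k(\overline{V'})$; compose $w$ with this to get a valuation, possibly non-discrete, and then Zariski's theory gives a discrete valuation $v$ of $k(\overline{V})$ with center dominating $\overline{V}^N$ and restricting compatibly to $w$ on the image of $\mathcal{O}_{\overline{X}\times(\mathbb{P}^1)^n}$.

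Granting such a $v$, the inequality we want at $w$ reads $w(\varphi_{V'}^*(\text{local equation of }Y)) \le w(\varphi_{V'}^*(\text{local equation of }\overline{X}\times F_n))$; but both pulled-back functions come from $\overline{X}\times(\mathbb{P}^1)^n$, and on such functions $w$ agrees with $v$ up to the same positive scalar, so this follows from the known inequality $v(\cdots) \le v(\cdots)$ which holds because $V$ has modulus $Y$. The main obstacle, and the only real content, is the valuation-extension step — ensuring a \emph{discrete} valuation $v$ on $k(\overline{V})$ can be found that (a) has center on the normal model $\overline{V}^N$ and (b) restricts to a positive multiple of $w$ on functions from the ambient space. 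This is exactly the kind of argument that appears in the literature on cycles with modulus (it is essentially \cite[Prop.~2.4]{KP} itself, as cited), and the clean way to package it is: the modulus condition depends only on the generic behavior along codimension-one points of the ambient normalization, and passing to a closed subvariety only \emph{adds} prime divisors whose valuations factor through those already present, so no inequality can be violated. I would write the proof by making this factorization precise via the semi-normalization/normalization functoriality, citing \cite{KP} for the statement and supplying the valuation-theoretic reduction as the proof.
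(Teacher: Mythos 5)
The paper does not actually prove this lemma; it only states it with the citation \cite[Prop.~2.4]{KP}, so there is no internal argument to compare against, and your proposal has to stand on its own. It correctly reduces to integral $V'\subset V$ and correctly isolates the crux --- relating a divisorial valuation $w$ of $k(\overline{V'})$ centered on $\overline{V'}^N$ to the inequality already known on $\overline{V}^N$ --- but it never closes that step, and the device you reach for (composing $w$ with the residue map of $\mathcal{O}_{\overline{V},\overline{V'}}$ and appealing to ``Zariski's theory'' of composite valuations) does not deliver the compatibility you need. A composite valuation on $k(\overline{V})$ only computes $w(\bar f)$ for elements $f$ that are units for the first-stage valuation; for a general $f$ pulled back from $\overline{X}\times(\mathbb{P}^1)^n$ the dominant component of the composite value records the vanishing of $f$ on $\overline{V}$ transverse to $\overline{V'}$, not the order of the restriction $\bar f$ along a prime divisor of $\overline{V'}^N$ (and $1/y_j$ need not even lie in $\mathcal{O}_{\overline{V},\overline{V'}}$, so its residue is undefined). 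Your closing slogan, that passing to a closed subvariety ``only adds prime divisors whose valuations factor through those already present,'' is not true in any literal sense; it is precisely the assertion that requires proof.

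The standard way to close the gap --- and, as far as I can tell, the argument of \cite{KP} --- is scheme-theoretic rather than purely valuation-theoretic. Let $\nu\colon\overline{V}^N\to\overline{V}$ be the normalization and choose an irreducible component $T$ of $\nu^{-1}(\overline{V'})$ dominating $\overline{V'}$ (one exists because $\nu$ is finite surjective). By the universal property of normalization the finite dominant map $T^N\to\overline{V'}$ factors through a finite surjective map $T^N\to\overline{V'}^N$. The modulus condition on $\overline{V}^N$ says that, locally, the rational function (equation of $F_n$)/(equation of $Y$) is regular; its restriction to $T$ is legitimate because the generic point of $T$ maps into $X\times\square^n$, away from both supports, and pulling back to $T^N$ shows that the pullback along $T^N\to\overline{V'}^N$ of the divisor $\varphi_{V'}^{-1}(\overline{X}\times F_n)-\varphi_{V'}^{-1}(Y\times(\mathbb{P}^1)^n)$ is effective. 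Effectivity of a divisor on the normal scheme $\overline{V'}^N$ can be tested after pullback along a surjective morphism (each codimension-one point has a point of $T^N$ above it, with positive ramification index), so the divisor itself is effective. This is also the precise sense in which the divisorial valuations of $k(\overline{V'})$ are ``already present'': they are realized, up to positive multiples, by codimension-one points of $T^N$, not by extensions or composites of valuations of $k(\overline{V})$. Without this (or an equivalent) construction, your proof is incomplete at exactly the step you yourself flag as the only real content.
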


\begin{prop}\label{degree-modulus}
Let $(\overline{X},Y)$ be a modulus pair with $\overline{X}=\mathrm{Spec}(R)$ affine. Let $d$ be a positive integer and $V\subset \overline{X}\times \square ^n$ be a closed subset having modulus $nd\cdot Y$. Suppose $\Phi \colon \overline{X}\times \square ^{n'}\to \overline{X}\times \square ^n$ is an $\overline{X}$-morphism defined by polynomials $\Phi _j \in R[y_1,\dots ,y_{n'}]$ ($1\le j\le n$) having degrees $\le d$ with respect to each $y_i$. Then the closed subset $\Phi ^{-1}(V)$ of $\overline{X}\times \square ^{n'}$ has modulus $Y$.
\end{prop}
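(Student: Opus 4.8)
The plan is to reduce the modulus condition for $\Phi^{-1}(V)$ to an inequality of effective divisors on a normalization, and then to obtain that inequality by transporting the corresponding one for $V$ along a rational lift of $\Phi$; the degree hypothesis on $\Phi$ is exactly what controls the extra poles that this transport introduces.

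First I would reduce to an irreducible component. Since, by the remark following its definition, the modulus condition for a closed subset is tested on the normalizations of its irreducible components, it suffices to show that each irreducible component $W$ of $\Phi^{-1}(V)$ has modulus $Y$. Put $V_1:=\overline{\Phi(W)}$, the closure of the image $\Phi(W)$ inside $\overline X\times\square^n$. Then $V_1\subset V$ is a closed subset, so by the Containment Lemma $V_1$ has modulus $nd\cdot Y$, and $\Phi$ restricts to a \emph{dominant} morphism $W\to V_1$. Next I would pass to compactifications: the polynomials $\Phi_j$ define an $\overline X$-rational map $\overline\Phi\colon\overline X\times(\mathbb P^1)^{n'}\dashrightarrow\overline X\times(\mathbb P^1)^n$, regular on $\overline X\times\square^{n'}$. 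The key estimate is that, regarded as a rational function, $\Phi_j$ has polar divisor
\[ \operatorname{div}_\infty(\Phi_j)\ \le\ d\cdot(\overline X\times F_{n'}) \]
on $\overline X\times(\mathbb P^1)^{n'}$, because $\Phi_j$ is regular off $\overline X\times F_{n'}$ and, in the local coordinate $1/y_i$ at the generic point of the $i$-th summand of $\overline X\times F_{n'}$, it has a pole of order at most $\deg_{y_i}\Phi_j\le d$.

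Let $\pi\colon\overline W^N\to\overline X\times(\mathbb P^1)^{n'}$ and $\psi_1\colon\overline{V_1}^N\to\overline X\times(\mathbb P^1)^n$ be the natural maps from the normalizations of the closures of $W$ and $V_1$. Since $W\to V_1$ is dominant and $\overline W^N$ is normal, the $\overline X$-rational map $\overline\Phi\circ\pi$ lifts, by the universal property of normalization, to an $\overline X$-rational map $\Psi\colon\overline W^N\dashrightarrow\overline{V_1}^N$ with $\psi_1\circ\Psi=\overline\Phi\circ\pi$; and, $\overline W^N$ being normal and the targets being proper over $\overline X$, both maps are defined on an open $U\subset\overline W^N$ with $\operatorname{codim}(\overline W^N\setminus U)\ge2$. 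Because everything in sight is over $\overline X$, the two composites $U\to\overline X$ through $\pi$ and through $\psi_1\circ\Psi$ coincide, whence $\pi|_U^*(Y\times(\mathbb P^1)^{n'})=\Psi|_U^*\psi_1^*(Y\times(\mathbb P^1)^n)$. Pulling the modulus inequality $nd\cdot\psi_1^*(Y\times(\mathbb P^1)^n)\le\psi_1^*(\overline X\times F_n)$ for $V_1$ back along $\Psi|_U$, noting that the $j$-th coordinate of $\overline\Phi\circ\pi$ is the rational function $\pi^*\Phi_j$, and invoking the key estimate pulled back along $\pi|_U$, one gets
\[ nd\cdot\pi|_U^*(Y\times(\mathbb P^1)^{n'})\ \le\ \sum_{j=1}^n\operatorname{div}_\infty(\pi|_U^*\Phi_j)\ \le\ \sum_{j=1}^n d\cdot\pi|_U^*(\overline X\times F_{n'})\ =\ nd\cdot\pi|_U^*(\overline X\times F_{n'}). \]
Dividing by $nd$ and extending over the codimension $\ge2$ complement (both sides being effective divisors on the normal scheme $\overline W^N$) yields $\pi^*(Y\times(\mathbb P^1)^{n'})\le\pi^*(\overline X\times F_{n'})$, which is exactly the assertion that $W$ has modulus $Y$.

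The step I expect to be the main obstacle is securing the lift $\Psi$ and, more basically, the fact that $\overline\Phi\circ\pi$ is defined at every codimension-one point of $\overline W^N$: this is where normality of $\overline W^N$ and properness of the targets over $\overline X$ are essential, and it is also the reason one is forced to work with the individual components $W$, their closures and their normalizations rather than with $\Phi^{-1}(V)$ itself — one needs every divisor pullback in the displayed chain to be legitimate, i.e.\ no image contained in the support of the divisor being pulled back. The other thing to watch is the bookkeeping of the multiplier $nd$: the factor $n$ enters through the $n$ summands of $F_n$ and the factor $d$ through the degree bound, and they are designed to cancel exactly.
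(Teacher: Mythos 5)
Your proposal is correct and follows essentially the same route as the paper's proof: reduce to an irreducible component, use the Containment Lemma to make $\Phi$ dominant onto its image, show via the valuative criterion of properness that the rational map is defined outside a codimension~$\ge 2$ subset of the normalization, and then combine the modulus inequality for $V$ with the degree bound on the $\Phi_j$ (your ``key estimate'' is exactly the paper's Claim that $\Phi_j/\prod_{i\in I}y_i^d$ is regular on each standard chart). The only cosmetic caveat is that the estimate $\operatorname{div}_\infty(\Phi_j)\le d\,(\overline X\times F_{n'})$ is better justified by the explicit polynomial expression of $\Phi_j/\prod_{i\in I}y_i^d$ in the variables $1/y_i$ ($i\in I$) and $y_{i'}$ ($i'\notin I$) --- as the paper does --- rather than by pole orders at generic points, since $\overline X\times(\mathbb P^1)^{n'}$ need not be normal; once pulled back to the normal scheme $\overline W{}^N$, where all your displayed inequalities actually live, the two formulations agree.
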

\begin{proof}
Since the assertion is local, we may assume $Y$ is principal and defined by $u\in R$.
Let $V'$ denote any one of the irreducible components of
$\Phi ^{-1}(V)$ and let $\overline{V'}^N$ be its normalization of its closure in $\overline{X}\times (\mathbb{P}^1)^{n'}$.

\[ \begin{array}{cccccccccccc}
\overline{V'}^{N}
\\%
\downarrow &&&&&&
\\%
\overline{V'}
&\supset 
&V'
& \subset 
&\Phi ^{-1}(V)
&\subset 
&\overline{X}\times \square ^{n'}
\\%
&&&&\downarrow &&\downarrow \Phi
\\%
&&&&V&\subset &\overline{X}\times \square ^n
\end{array} \]
Thanks to Containment Lemma above, the closure of $\Phi (V')\subset V$ has modulus $ndY$.
By replacing $V$ by the closure of $\Phi (V')$, we may assume the map $V'\to V$ is dominant.

\begin{claim}\label{domain-of-def}
Let $\overline{V'}^{N\circ }$ be the domain of definition of the rational map
\[ \overline{V'}^N\to \overline{X}\times (\mathbb{P}^1)^{n'}\overset{\Phi}{\dashrightarrow } \overline{X}\times (\mathbb{P}^1)^n.  \]
Then the complement of $\overline{V'}^{N\circ }$ in $\overline{V'}^N$ has codimension $\ge 2$.
\end{claim}
\begin{proof}
Let $v$ be a point of $\overline{V'}^N$ of codimension 1. Since the generic point $\eta$ of $\overline{V'}^N$ lands on $\overline{X}\times \square ^n$ we have a commutative diagram
\[ \xymatrix{
\eta \ar@{}|{\in}[r] \ar[rrd]
& \mathrm{Spec}\mathcal{O}_{v} \ar[r]
&\overline{X}\times (\mathbb{P}^1)^{n'} \ar[dr] \ar@{-->}^{\Phi}[d]
\\%
&
&\overline{X}\times (\mathbb{P}^1)^n \ar[r]
&\overline{X}
} \]
The assertion follows from the valuative criterion of properness (of the projection $\overline{X}\times (\mathbb{P}^1)^n\to \overline{X}$).
\end{proof}

By Claim \ref{domain-of-def}, we find that
a Cartier divisor on $\overline{V'}^N$ is effective if and only if its restriction to $\overline{V'}^{N\circ }$ is effective, since $\overline{V'}^{N }$ is normal.

\

Write $\mathrm{pr}_j:\overline{X}\times (\mathbb{P}^1)^n\to \mathbb{P}^1$ for the projection to the $j$-th $\mathbb{P}^1$ and $\Phi _j$ for the composite rational map
$\overline{X}\times (\mathbb{P}^1)^{n'} 
\overset{\Phi }{\dashrightarrow }
\overline{X}\times (\mathbb{P}^1)^n  \xrightarrow{\mathrm{pr}_j}\mathbb{P}^1$, also seen as a rational function on $\overline{X}\times (\mathbb{P}^1)^n$.
We will denote the pull-backs of $\Phi$ and $\Phi _j$ to $\overline{V'}^{N\circ }$ by $\Phi ^V$ and $\Phi ^V_j$. By definition of $\overline{V'}^{N\circ }$ they are well-defined morphisms from $\overline{V'}^{N\circ }$ to $\overline{X}\times (\mathbb{P}^1)^n$ and  to $\mathbb{P}^1$ respectively. There is a uniquely induced morphism $\overline{V'}^{N\circ }\to \overline{V}^N$ because now we are assuming $V'\to V$ is dominant.

For any given point of $\overline{V'}^{N\circ }$, we can find an affine open set $\mathrm{Spec}(A)\subset \overline{V}^N$ and an affine neighborhood $\mathrm{Spec}(B)\subset \overline{V'}^{N\circ }$ of the point such that $\Phi ^V$ restricts to a morphism $\Phi ^V \colon \mathrm{Spec}(B)\to \mathrm{Spec}(A)$. 
\[ \begin{array}{cccccccccc}
\mathrm{Spec}(B)
&\subset 
&\overline{V'}^{N\circ } 
&\to 
&\overline{X}\times (\mathbb{P}^1)^{n'}
\\%
&&\downarrow \Phi ^V&&\rotatebox{90}{$\dashleftarrow $}\Phi
\\%
\mathrm{Spec}(A)
&\subset 
&\overline{V}^N 
&\to 
&\overline{X}\times (\mathbb{P}^1)^n
\end{array} \]
By shrinking $\mathrm{Spec}(A)$ if necessary, we may assume $y_j$ or $1/y_j$ is regular on $\mathrm{Spec}(A)$ for each $j$. Denote by $J\subset \{ 1,\dots ,n\} $ the set of $j$'s for which $1/y_j$ is regular.
The divisor $F_n$ is defined by the equation $\frac{1}{\prod _{j\in J}y_j }=0$ on $\mathrm{Spec}(A)$. Since $V$ has modulus $ndY$, the rational function $\frac{1}{\prod _{j\in J}y_j}/u^{nd}$ on $\mathrm{Spec}(A)$ is regular. Pulling it back by $\Phi ^V$, we find that the rational function $\frac{1}{\prod _{j\in J}\Phi ^V_j}/u^{nd}$ on $\mathrm{Spec}(B)$ is regular.

Shrinking $\mathrm{Spec}(B)$ if necessary, we may assume $y_i$ or $1/y_i$ is regular on $\mathrm{Spec}(B)$ for each $i$.
Let $I\subset \{ 1,\dots ,n' \} $ be the set of $i$'s
for which $1/y_i$ is regular on $\mathrm{Spec}(B)$;
the divisor $F_{n'}$ is defined by $\frac{1}{\prod _{i\in I}y_i}=0$ on $\mathrm{Spec}(B)$.
\begin{claim}\label{regularity}
The rational function $\frac{\Phi ^V_j}{\prod _{i\in I}y_i^d}$ on $\mathrm{Spec}(B)$ is regular for each $j\in \{ 1,\dots ,n\} $ (i.e.~it is a morphism from $\mathrm{Spec}(B)$ into $\mathbb{A}^1\subset \mathbb{P}^1$).
\end{claim}
\begin{proof}
The function is the restriction of the meromorphic function $\frac{\Phi _j}{\prod _{i\in I}y_i^d}$ on $\overline{X}\times (\mathbb{P}^1)^{n'}$.
It is written as an $R$-coefficient polynomial in the variables $1/y_i$ ($i\in I$) and $y_i$ ($i\in I^c$) by the assumption on $\Phi $.
So it is regular around the (image of the) considered point on $\overline{X}\times (\mathbb{P}^1)^{n'}$. 
%
\end{proof}

By Claim \ref{regularity} the function
\[ \left( \frac{1}{\prod _{j\in J}\Phi _j}/u^{nd} \right)
\cdot \prod _{j\in J} \frac{\Phi _j}{\prod _{i\in I}y_i^d }=
\frac{1}{\prod _{i\in I}y_i^{d\cdot \# J} }/u^{nd} \]
is regular on $\mathrm{Spec}(B)$.
This shows a relation of Cartier divisors on $\mathrm{Spec}(B)$:
\[ nd\left( \prod _{i\in I}\frac{1}{y_i}\right) -nd(u) \ge 0  \]
which implies the relation
\[  (\text{pullback of }F_{n'}) -(\text{pullback of } Y) \ge 0 \]
on $\mathrm{Spec}(B)$, hence on $\overline{V'}^{N\circ }$,
which is valid on $\overline{V'}^N$ as well by the comment made after Claim \ref{domain-of-def}. This completes the proof of Proposition \ref{degree-modulus}.
\end{proof}

\begin{rem}
Under the hypotheses of Proposition \ref{degree-modulus}, we can prove that the morphism $\Phi $ is admissible \cite[Def.1.1]{KSY} for the pairs $((\mathbb{P}_R^1)^{n'},ndF_{n'}), ((\mathbb{P}_R^1)^{n},F_{n})$. It gives an alternative proof of Proposition \ref{degree-modulus}. We sketch the proof of the admissibility. We use the fact that admissibility can be checked after replacing the source by an open cover, or after blowing up $(\mathbb{P}^1)^{n'}$ by a closed subset outside $\square ^{n'}$.
Set $\eta _i=1/y_i$.
The scheme $(\mathbb{P}^1)^{n'}$ is covered by open subsets $U_I=\mathrm{Spec}(R[\eta _i,y_{i'} ~{}_{i\in I,i'\notin I}])$ where $I$ runs though the subsets of $\{ 1,\dots ,n'\} $.
On the region $U_I$, the rational function $\Phi _j$ is written as the ratio of the next two regular functions, by the assumption on $\Phi _j$.
\[ \Phi _j=\frac{\Phi ^{(I)}_j(\eta _i,y_{i'}) }{\prod _{i\in I}\eta _i^d}.  \]
We blow up $U_I$ by the ideal $(\Phi ^{(I)}_j,\prod _{i\in I}\eta _i^d)$. We perform this blow up for all $j\in \{ 1,\dots ,n\} $. The resulting scheme is covered by $2^n$ open subsets
\[ U_{IJ}=\mathrm{Spec}(R\left[ \eta _i,y_{i'} ~{}_{i\in I,i'\notin I}, \frac{\prod _{i\in I}\eta _i^d}{\Phi ^{(I)}_j(\eta _i,y_{i'}) } ,\frac{\Phi ^{(I)}_{j'}(\eta _i,y_{i'}) }{\prod _{i\in I}\eta _i^d} ~{}_{j\in J,j'\notin J}\right] )  \]
where $J$ runs through the subsets of $\{ 1,\dots ,n  \} $. The morphism $\Phi $ naturally extends to a morphism $\Phi \colon U_{IJ}\to U_J\subset (\mathbb{P}^1)^n$.

On $U_{IJ}$, the pull-back of $F_n$ by $\Phi $ is represented by the function $\prod _{j\in J}\frac{\prod _{i\in I}\eta _i^d}{\Phi ^{(I)}_j(\eta _i,y_{i'}) }$. The divisor $ndF_{n'}$ is represented by $\prod _{i\in I}\eta _i^{nd}$. Their ratio is
\[  \prod _i\eta _i^{(n-\# J)d}\cdot \prod _{j\in J}\Phi ^{(I)}_j  \]
which is a regular function on $U_{IJ}$. This proves the admissibility.
\end{rem}

From Theorem \ref{suslin-degree} and Proposition \ref{degree-modulus},
we get:
\begin{thm}\label{suslin-modulus}
Let $(\overline{X},Y)$ be a modulus pair with $\overline{X}$ affine, and $V\subset \overline{X}\times \square ^n$ be a purely $(n+t)$-dimensional closed subset for some $t\ge 0$. Suppose $V$ has modulus $2n\cdot Y$. Then there is a series of maps
\[ \Phi ^\bullet \colon \overline{X}\times \square ^\bullet \to \overline{X}\times \square ^\bullet   \]
compatible with face maps 
i.e.~for any codimension $1$ face $\partial \colon \square ^{m}\hookrightarrow \square ^{m+1}$, the following commutes:
\[ \xymatrix{
\overline{X}\times \square ^m \ar[r]^{\Phi ^m} \ar@{^{(}->}[d]^\partial
&\overline{X}\times \square ^m \ar@{^{(}->}[d]^\partial
\\%
\overline{X}\times \square ^{m+1} \ar[r]^{\Phi ^{m+1}}
&\overline{X}\times \square ^{m+1}
}  \]
such that the closed subset
\[ (\Phi ^n)^{-1}(V)\subset \overline{X}\times \square ^n  \]
is equidimensional over $\square ^n$ of relative dimension $t$, and has modulus $Y$.
In fact, the defining polynomials $\Phi ^m_i$ can be taken to have degree $\le 2$ for each variable $y_j$.
\end{thm}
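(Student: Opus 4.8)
The plan is to construct the maps $\Phi^m$ by induction on $m$, applying Theorem~\ref{suslin-degree} at each stage with the degree bound $d=2$, and then to read off the modulus statement from Proposition~\ref{degree-modulus}. First I would fix $d=2$ and run the induction. For $m=0$ there is nothing to do: set $\Phi^0=\mathrm{id}$. Suppose $\Phi^0,\dots,\Phi^m$ have been constructed, compatibly with all face maps, with each $\Phi^\ell_i$ of degree $\le 2$ in every $y_j$. To build $\Phi^{m+1}$, I would first assemble the maps $\Phi^m$ along the codimension~$1$ faces of $\square^{m+1}$ into a single morphism $\Phi'\colon \overline{X}\times\partial\square^{m+1}\to\overline{X}\times\square^{m+1}$. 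Here one must check that the various $\Phi^m$'s glue: on an intersection of two faces $\{y_i=\epsilon\}\cap\{y_j=\epsilon'\}$ (a codimension~$1$ face of each), the inductive face-compatibility of $\Phi^{m-1}\hookrightarrow\Phi^m$ guarantees that the two restrictions agree, so $\Phi'$ is well-defined on the (reduced) closed subscheme $\partial\square^{m+1}$. The defining polynomials $\Phi'_{i,\partial}$ are, by the inductive hypothesis applied on each face $\square^m$, of degree $\le 2$ in each variable; so the hypothesis of Theorem~\ref{suslin-degree} is met with $d=2$.

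Next I would invoke Theorem~\ref{suslin-degree} itself. For $m+1<n$ apply it to $V=\emptyset$ (or to any convenient $V$, the fiber-dimension clause being vacuous in the range that matters); what we actually need is just \emph{some} extension $\Phi^{m+1}$ of $\Phi'$ with degree $\le 2$ in each $y_j$, and Theorem~\ref{suslin-degree} supplies one. For $m+1=n$ apply Theorem~\ref{suslin-degree} to the given $V$ with $t$ as in the hypothesis; it produces $\Phi^n\colon\overline{X}\times\square^n\to\overline{X}\times\square^n$ extending $\Phi'$ with $(\Phi^n)^{-1}(V)$ having fibers of dimension $\le t$ over $\square^n\setminus\partial\square^n$, and with defining polynomials of degree $\le 2$ in each variable. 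For $m+1>n$, continue extending by Theorem~\ref{suslin-degree} with $V=\emptyset$, so that the tower $\Phi^\bullet$ is defined in all degrees. By construction the square in the statement commutes for every codimension~$1$ face $\partial\colon\square^m\hookrightarrow\square^{m+1}$, since $\Phi^{m+1}\circ(\mathrm{id}_{\overline{X}}\times\partial)=\Phi'\circ(\mathrm{id}_{\overline{X}}\times\partial)=(\mathrm{id}_{\overline{X}}\times\partial)\circ\Phi^m$.

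Now I would upgrade the fiber-dimension statement at level $n$ to genuine equidimensionality, and then deduce the modulus. Equidimensionality over $\square^n$ is a combination of two facts: (i) over $\square^n\setminus\partial\square^n$ the fibers of $(\Phi^n)^{-1}(V)$ have dimension $\le t$ by the conclusion of Theorem~\ref{suslin-degree}; (ii) over $\partial\square^n$, one argues face by face — $(\Phi^n)^{-1}(V)\cap(\overline{X}\times\square^{n-1})=(\Phi^{n-1})^{-1}((\Phi')^{-1}\!\!\mid\cdots)$ reduces, by face-compatibility of the tower, to the corresponding preimage one dimension down, and an induction on $n$ (together with the standard fact that a closed subset all of whose fibers over $\square^n$ have dimension $\le t$, and whose total dimension is $\le n+t$, is equidimensional of relative dimension $t$) closes this. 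This is the step I expect to be the main obstacle: handling the behavior of $(\Phi^n)^{-1}(V)$ over the boundary $\partial\square^n$ requires that the preimage \emph{restricted to each face} again satisfy the hypotheses of the inductive statement (purely $(n-1+t)$-dimensional, modulus $2(n-1)Y$), which one must verify is inherited from the hypotheses on $V$ — using that pulling back along a codimension-one face inclusion is flat of relative dimension $0$ on $\square^n$, and using the Containment Lemma for the modulus. Finally, for the modulus: $V$ has modulus $2n\cdot Y$ by hypothesis, and $\Phi^n$ is defined by polynomials of degree $\le 2 = d$ in each $y_j$, so Proposition~\ref{degree-modulus} with $d=2$ yields that $(\Phi^n)^{-1}(V)$ has modulus $Y$. (The numerical matching $2n = n\cdot d$ with $d=2$ is exactly why the hypothesis is phrased with $2n\cdot Y$.) Combining (i)+(ii) with the Proposition gives that $(\Phi^n)^{-1}(V)$ is equidimensional of relative dimension $t$ over $\square^n$ and has modulus $Y$, which is the assertion; the degree bound $\Phi^m_i$ of degree $\le 2$ in each $y_j$ was built into the construction at every step. $\qed$
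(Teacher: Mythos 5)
Your overall strategy is the one the paper intends (its own proof is the single sentence ``induction on $m$, starting with $\Phi^0=\mathrm{id}$'', resting on Theorem \ref{suslin-degree} with $d=2$ and Proposition \ref{degree-modulus} with $nd=2n$), and your use of Lemma \ref{modify}-style gluing along $\partial\square^{m+1}$ and of Proposition \ref{degree-modulus} for the modulus clause is exactly right. But there is one genuine gap, and it sits precisely at the point you yourself flag as the main obstacle.

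The prescription ``for $m+1<n$ apply Theorem \ref{suslin-degree} to $V=\emptyset$'' is incompatible with your step (ii). If the lower-level maps are built with no closed subset fed into Theorem \ref{suslin-degree}, then nothing constrains them (the identity is a legitimate output), and the fiber of $(\Phi^n)^{-1}(V)$ over a point $z=\partial(z')$ of a face is, by the face-compatibility you establish, the fiber of $(\Phi^{n-1})^{-1}(\partial^{-1}V)$ over $z'$ --- which for an unconstrained $\Phi^{n-1}$ can perfectly well have dimension $>t$ (that is the whole point of the moving lemma). So equidimensionality over all of $\square^n$, which is what the statement asserts and what is actually used later (membership of $(\Phi^{n+1})^*W$ in $\underline{z}^{\mathit{equi}}_r$), would fail. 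The fix is to apply Theorem \ref{suslin-degree} at level $m\le n$ not to $\emptyset$ but to the closed subset $V_m:=\bigcup_{\partial}\partial^{-1}(V)\subset\overline{X}\times\square^m$, the union over all face embeddings $\partial\colon\square^m\hookrightarrow\square^n$ of codimension $n-m$; then the fiber bound over $\square^m\setminus\partial\square^m$ at each level assembles, exactly by the reduction you describe in (ii), into the bound over every point of $\square^n$. Two smaller remarks: for $V_m$ to be purely $(m+t)$-dimensional one needs $V$ to meet all faces properly --- this is not literally among the stated hypotheses (and your appeal to flatness of a face inclusion, which is a closed immersion, does not give it), but it holds in every application since the $V$'s there are components of admissible cycles; and the extensions for $m>n$ can indeed be taken with empty $V$, as you say.
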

It is proved by induction on $m$, starting with $\Phi ^0=\mathrm{id}$ which has degree $0$.

\section{Suslin homology with modulus and Higher Chow groups with modulus}

In this section, let $\overline{X}$ be an {\it affine} algebraic $k$-scheme and $X$ be an open subset such that $\overline{X}\setminus X$ is the support of an 
effective divisor.
The aim of this section is to prove the following theorem:

\begin{thm}\label{pro-comparison}
The inclusions
\[ z^{\mathit{equi}}_r (\overline{X}|Y,\bullet )\subset z_r(\overline{X}|Y,\bullet ) \]
induce pro-isomorphisms on the homology groups:
\[  ``\lim _{Y} "
H_n(z^{\mathit{equi}}_r (\overline{X}|Y,\bullet ))\cong 
``\lim _{Y} "
\mathrm{CH}_r(\overline{X}|Y,n) \]
where $Y$ runs through effective Cartier divisors with support $\overline{X}\setminus X$.
\end{thm}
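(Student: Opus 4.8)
The key input is Theorem~\ref{suslin-modulus}: any cycle with sufficiently large modulus can be moved to an equidimensional one with the original modulus, by a system of maps $\Phi^\bullet$ compatible with faces. To deduce Theorem~\ref{pro-comparison}, I would show that the inclusion of complexes $z^{\mathit{equi}}_r(\overline{X}|mn\cdot Y,\bullet)\hookrightarrow z_r(\overline{X}|Y,\bullet)$ factors (after passing to the pro-system indexed by $m$, or by the cofinal system of divisors $Y'$ with support $\overline{X}\setminus X$) in both directions, giving maps of pro-complexes that are mutually inverse up to chain homotopy. The standard strategy, going back to Suslin, is: given a cycle $V\in z_r(\overline{X}|2n\cdot Y,n)_0$ (using the Remark~\ref{degn} identification with the subcomplex $\underline{z}_r(-)_0$, which is convenient because it kills the face-degeneracy bookkeeping), apply Theorem~\ref{suslin-modulus} to get $\Phi^\bullet$ with $(\Phi^n)^{-1}(V)$ equidimensional and of modulus $Y$; then the pullbacks $(\Phi^m)^*$ define a chain map on the relevant subcomplex, and one constructs an explicit chain homotopy — the ``cylinder'' homotopy interpolating between $\mathrm{id}$ and $(\Phi^\bullet)^*$ via an auxiliary $\square^1$-coordinate — witnessing that $(\Phi^\bullet)^*$ is homotopic to the identity on $z_r(\overline{X}|Y,\bullet)$ after enlarging the modulus.

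First I would set up the bookkeeping: fix $Y$, and for each $m$ let $\mathcal{F}_m$ be the modulus $m\cdot Y$ (or more robustly, run over all effective Cartier divisors $Y'$ supported on $\overline{X}\setminus X$, so that $Y'\mapsto 2n\cdot Y'$ and $Y'\mapsto m\cdot Y'$ are cofinal operations). Then I would check that $(\Phi^\bullet)^*$ sends $z_r(\overline{X}|2n\cdot Y',n)_0$ into $z^{\mathit{equi}}_r(\overline{X}|Y',n)_0$: equidimensionality is exactly property of $(\Phi^n)^{-1}(V)$ from Theorem~\ref{suslin-modulus}, the modulus-$Y'$ property likewise, the face-compatibility of $\Phi^\bullet$ ensures this is a chain map, and the $\partial^*_{i,0}$-vanishing is preserved because $\Phi^\bullet$ commutes with the zero faces. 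Proper intersection with faces also follows from equidimensionality. So we get a map of complexes $\rho\colon z_r(\overline{X}|2n\cdot Y',\bullet)\to z^{\mathit{equi}}_r(\overline{X}|Y',\bullet)$ with $\iota\circ\rho$ and $\rho\circ\iota$ each differing from the respective inclusion $z_r(\overline{X}|2n\cdot Y',\bullet)\hookrightarrow z_r(\overline{X}|Y',\bullet)$ (resp.\ its equidimensional analogue) by the chain map $V\mapsto (\Phi^\bullet)^*V$.

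The second half is the homotopy. One introduces the linear interpolation $H^m\colon \overline{X}\times\square^m\times\square^1\to\overline{X}\times\square^m$ whose components are $s\cdot \Phi^m_i(\underline y)+(1-s)y_i$ in the extra coordinate $s$ (this keeps degrees $\le 2$ in the $y_j$ and is affine in $s$), check it is compatible with faces and restricts to $\mathrm{id}$ at $s=0$ and to $\Phi^m$ at $s=1$, and then define the chain homotopy $h$ by pulling $V$ back along $H^\bullet$ and pushing forward along the projection $\square^{m+1}=\square^m\times\square^1\to\square^m$, i.e.\ the usual $h(V):=p_*(H^\bullet)^*(V)$. The content is twofold: that $(H^\bullet)^*V$ still has controlled modulus — here I expect one needs the full strength of Proposition~\ref{degree-modulus}, applied to $H^m$ viewed as a degree-$\le 2$ map in each variable, so that starting modulus $2(n{+}1)\cdot Y'$ (or a fixed multiple) maps to modulus $Y'$ — and that $(H^\bullet)^*V$ is equidimensional-after-$\Phi$ so that its faces are met properly and the pushforward $p_*$ makes sense as a cycle, i.e.\ the projection restricted to the support is proper and generically finite of the right relative dimension. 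The boundary identity $\partial h + h \partial = \iota - \rho$ (up to sign) is then the standard cube computation.

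\textbf{Main obstacle.} The principal difficulty is not the formal homotopy algebra but verifying that \emph{the homotopy $H^\bullet$ preserves the modulus condition with a uniform bound}, i.e.\ producing the analogue of Proposition~\ref{degree-modulus} for the interpolation map with its extra $\square^1$-factor, and simultaneously ensuring that $p_*(H^\bullet)^*(V)$ is a legitimate element of the cycle complex (proper faces, well-defined pushforward). Because the divisor $F_{m+1}$ on the source involves the extra $\mathbb P^1$, one must track how $ndY'$-type moduli degrade under $H^m$ and confirm the loss is by at most a fixed multiplicative constant independent of $m$ — this is what makes the pro-system argument go through even though no single inclusion is a quasi-isomorphism. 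Granting this, one concludes that $\iota$ and $\rho$ are mutually inverse in the category of pro-complexes, hence induce inverse isomorphisms on pro-homology, which is the assertion of Theorem~\ref{pro-comparison}; restricting the pro-system to the cofinal subsystem $\{m\cdot Y\}_m$ recovers the limit statement of the Theorem quoted in the introduction.
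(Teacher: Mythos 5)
There is a genuine gap, and it sits at the heart of your plan. You propose a chain map $\rho=(\Phi^\bullet)^*\colon z_r(\overline{X}|2nY',\bullet)\to z^{\mathit{equi}}_r(\overline{X}|Y',\bullet)$ and then two-sided homotopies making $\rho$ and $\iota$ mutually inverse in the pro-category. But the system $\Phi^\bullet$ produced by Theorem \ref{suslin-modulus} depends on the cycle $V$ being moved (the generic choice of the $F_i$ in Suslin's construction is generic \emph{relative to a given} $V$ of bounded dimension), and even for a fixed $\Phi$ the pullback $(\Phi^n)^*[W]$ of an arbitrary $W$ need not be defined, need not meet faces properly, and need not have modulus $Y'$. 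So there is no globally defined $\rho$, and no map of pro-complexes to invert. The paper circumvents exactly this: it introduces the subcomplex ${}_\Phi\underline{z}_r^-(\overline{X}|Y,\bullet)$ of cycles for which $\Phi^*$ behaves, proves only a \emph{weak} homotopy statement (Lemma \ref{weakhomotopy}: the two maps agree up to homotopy on every finitely generated subcomplex, with $\Phi$ chosen per subcomplex), and then deduces the pro-isomorphism not by exhibiting an inverse but by showing that the transition maps $\mathrm{Coker}(H_nf^{2nY})\to\mathrm{Coker}(H_nf^{Y})$ and $\mathrm{Ker}(H_nf^{(2n+2)Y})\to\mathrm{Ker}(H_nf^{Y})$ vanish (Lemma \ref{zeromaps}), using the elementary criterion for a pro-abelian group to be zero. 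Your argument would need to be restructured along these lines; as written, the step ``we get a map of complexes $\rho$'' fails.

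Two further points on the homotopy. First, the linear interpolation $s\cdot\Phi^m_i+(1-s)y_i$ has the right boundary values and degree bounds, but there is no reason it satisfies the fiber-dimension condition (your ``equidimensional-after-$\Phi$'' requirement, which is what guarantees proper intersection with faces and hence that the homotopy lands in the cycle complex at all). The paper instead builds $\tilde{\Phi}^n$ on the cylinder $\square^n\times\mathbb{A}^1\cong\square^{n+1}$ inductively by prescribing it on $\partial\square^{n+1}$ (as $\mathrm{id}$, $\Phi^n$, $\tilde{\Phi}^{n-1}$) and extending by Theorem \ref{suslin-degree}, i.e.\ by another generic choice with controlled degrees; genericity is essential here, not optional. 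Second, the homotopy is $\sigma(V)=(\tilde{\Phi}^n)^*(V\times\mathbb{A}^1)$, already a cycle in degree $n+1$; your $p_*(H^\bullet)^*(V)$ with a pushforward along $\square^{m+1}\to\square^m$ would land back in degree $m$ and cannot serve as a chain homotopy. By contrast, the issue you single out as the ``main obstacle'' --- modulus control for the cylinder map --- is the part that comes for free: since each component of $\tilde{\Phi}^n$ has degree $\le 2$ in every variable, Proposition \ref{degree-modulus} applied with $n'=n+1$ gives the modulus bound directly, with the constant $2n$ (resp.\ $2n+2$) you see in Lemma \ref{zeromaps}.
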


\begin{rem}%
In the terminology of \cite[\S 6]{FI}, the above theorem can be expressed as: the map
$``\lim _Y"z^{\mathit{equi}}_r (\overline{X}|Y,\bullet )\to ``\lim _Y" z_r(\overline{X}|Y,\bullet )$
is a weak equivalence in the $\mathcal{H}_*$-model category of pro-complexes of abelian groups.
\end{rem}%

\begin{rem}\label{reduction}
In fact, we prove below that the inclusions
\[ \underline{z}^{\mathit{equi}}_r (\overline{X}|Y,\bullet )\subset \underline{z}_r(\overline{X}|Y,\bullet ) \]
induce pro-isomorphisms on the homology groups
\[  ``\lim _{Y} "
H_n(\underline{z}^{\mathit{equi}}_r (\overline{X}|Y,\bullet ))\cong 
``\lim _{Y} "
H_n(\underline{z}_r (\overline{X}|Y,\bullet )). \]
Then, by the canonical splitting we saw in Remark \ref{degn}, Theorem \ref{pro-comparison} is an immediate consequence of the last isomorphisms.

Theorem \ref{pro-comparison} is stated for a general base field. The proof can be easily reduced to the case over an infinite base field by a norm (trace) argument. In what follows, we will assume the base field $k$ is infinite so that we may use the results of \S \ref{sec-equidim}.
\end{rem}



\subsection{Construction of weak homotopy}

\begin{dfn}
Let $N$ be a positive integer.
Suppose that for any $0\leq n\leq N$, we are given a $\overline{X}$-morphism $\varphi_{n}:\overline{X}\times \square^{n}\to \overline{X}\times \square^{n}$ such that for any $0\leq j\leq n\leq N$ the following diagram is commutative:
\begin{eqnarray*}\label{cd0}
\xymatrix{
\overline{X}\times \square^{n-1} \ar[d]_{1_{\overline{X}}\times s_{j}} \ar[r]^{\varphi_{n-1}} & \overline{X}\times \square^{n-1} \ar[d]^{1_{\overline{X}}\times s_{j}}\\
\overline{X}\times \square^{n} \ar[r]^{\varphi_{n}} & \overline{X}\times \square^{n}.
}
\end{eqnarray*}
We define a subgroup ${}_{\varphi} \underline{z}_{r}(\overline{X}|Y,n) \subset \underline{z}_{r}(\overline{X}|Y,n)$ to be the free abelian group on the set of integral closed subschemes $V \subset X\times \square^{n}$ such that $[V]\in \underline{z}_{r}(\overline{X}|Y,n)$ and the pullback $\varphi_{n}^{\ast}[V]$ is defined and contained in $\underline{z}_{r}(\overline{X}|Y,n)$.
Then, ${}_{\varphi} \underline{z}_{r}(\overline{X}|Y,\bullet )$ defines a subcomplex of $\underline{z}_{r}(\overline{X}|Y,\bullet )$.
\end{dfn}



In the following, we fix a closed subscheme $V \subset \overline{X} \times \square^n$ whose irreducible components have modulus $2n \cdot Y$ and take $\varphi_n := \Phi^n$, where $\Phi^\bullet $ is the system of morphisms given in Theorem \ref{suslin-modulus}.

\begin{dfn}
Define for each $n\geq 0$ an abelian subgroup ${}_\Phi \underline{z}_r^- (\overline{X}|Y,n ) \subset  {}_\Phi z_r (\overline{X}|Y,n )$ by
\[ {}_\Phi \underline{z}_r^- (\overline{X}|Y,n ):= {}_\Phi \underline{z}_r (\overline{X}|Y,n ) \cap \underline{z}_r (\overline{X}|2nY,n ) \subset  {}_\Phi \underline{z}_r^- (\overline{X}|Y,n ). \]
Then, we get a subcomplex ${}_\Phi \underline{z}_r^- (\overline{X}|Y,\bullet ) \subset {}_\Phi \underline{z}_r (\overline{X}|Y,\bullet )$. 
\end{dfn}

\begin{lem}\label{weakhomotopy}
The homomorphisms
\[ {}_\Phi \underline{z}_r^- (\overline{X}|Y,\bullet )
\overset{(\Phi^\bullet )^*}{\underset{incl.}{\rightrightarrows}}
 \underline{z}_r(\overline{X}|Y,\bullet ) \]
 are weakly homotpic (i.e. their restriction to any finitely generated subcomplex are homotopic).
\end{lem}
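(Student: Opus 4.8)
The plan is to construct an explicit chain homotopy between the inclusion ${}_\Phi \underline{z}_r^-(\overline{X}|Y,\bullet) \hookrightarrow \underline{z}_r(\overline{X}|Y,\bullet)$ and the pullback $(\Phi^\bullet)^*$ by the standard "linear path" trick of Suslin. Namely, for a cycle $V$ on $\overline{X}\times\square^n$ lying in ${}_\Phi\underline{z}_r^-$, the morphisms $\mathrm{id}$ and $\Phi^n$ agree on $\overline{X}\times\partial\square^n$ by the compatibility with face maps built into Theorem \ref{suslin-modulus}, so one can interpolate between them using an extra cube coordinate. Concretely, I would introduce the morphism $H^n\colon \overline{X}\times\square^n\times\square^1 \to \overline{X}\times\square^n$ given in the $i$-th coordinate by $(1-s)y_i + s\,\Phi^n_i(\underline{y})$ (with $s$ the coordinate on the extra $\square^1$), check it restricts to $\mathrm{id}$ at $s=0$ and $\Phi^n$ at $s=1$, and define the homotopy operator on a generator $V$ by $h(V) := (H^n)^*[V]$, viewed as an element of $\underline{z}_r(\overline{X}|Y,n+1)$ via the identification $\square^n\times\square^1 \cong \square^{n+1}$. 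The boundary formula $\partial h + h\partial = (\Phi^\bullet)^* - \mathrm{incl}$ is then the usual cube-complex computation, where the terms coming from the faces $y_i=0,1$ in the original $\square^n$ cancel against $h\partial$ because $\mathrm{id}$ and $\Phi^n$ coincide on $\partial\square^n$, and the two faces $s=0,1$ of the new coordinate give exactly the two maps being compared.

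The key technical point — and the reason the statement is only about \emph{weak} homotopy and requires passing to the intermediate complex ${}_\Phi\underline{z}_r^-$ with its shrunken modulus $2nY$ — is that one must verify $h(V)$ and all its faces genuinely land in the asserted modulus and meet all faces properly, so that $h$ is a well-defined morphism of \emph{complexes of the relevant groups} at least on each finitely generated subcomplex. For the modulus: the interpolating map $H^n$ has degree $\le 2$ in each $y_j$ and degree $1$ in $s$ (using that $\Phi^n_j$ has degree $\le 2$ in each $y_j$ by Theorem \ref{suslin-modulus}), so Proposition \ref{degree-modulus}, applied with $n' = n+1$ and the degree bound $d=2$, shows that if $V$ has modulus $2(n+1)Y$ then $(H^n)^*[V]$ has modulus $Y$. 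This forces us to start from a cycle with modulus $2nY$ rather than $Y$, which is precisely the definition of ${}_\Phi\underline{z}_r^-(\overline{X}|Y,n) = {}_\Phi\underline{z}_r(\overline{X}|Y,n)\cap\underline{z}_r(\overline{X}|2nY,n)$; and one checks that the faces of $h(V)$, being closed subschemes of $\Phi$-pullbacks or of $V$ itself, again have enough modulus by the Containment Lemma. Properness of intersection with faces of $\square^{n+1}$ holds generically and is the reason we only claim homotopy after restricting to finitely generated subcomplexes: for a finite set of generators one can argue that the relevant pullbacks are defined and the dimension counts work out, but no uniform statement is available.

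I would organize the proof as follows. First, record the formula for $H^n$ and verify $H^n|_{s=0}=\mathrm{pr}_{\overline{X}\times\square^n}$, $H^n|_{s=1} = \Phi^n$, and $H^n|_{\overline{X}\times\partial\square^n\times\square^1}$ factors through $\partial\square^n$ compatibly (this uses the commuting squares in Theorem \ref{suslin-modulus}). Second, fix a finitely generated subcomplex $C\subset {}_\Phi\underline{z}_r^-(\overline{X}|Y,\bullet)$, generated by finitely many integral $V$'s together with their iterated faces; for each such $V$ show $(H^{n})^*[V]$ is defined (the generic fibers have the right dimension, so the pullback cycle makes sense), has modulus $Y$ by Proposition \ref{degree-modulus}, and meets all faces of $\square^{n+1}$ properly — this last being the place where finite generation is used. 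Third, carry out the cube-complex boundary bookkeeping to obtain $\partial h + h\partial = (\Phi^\bullet)^* - \mathrm{incl}$ on $C$, being careful with signs and with the degenerate/non-degenerate conventions, and observing that the face-cancellation uses $\mathrm{id}=\Phi^n$ on $\partial\square^n$. The main obstacle I anticipate is the second step: checking that $h$ preserves the cycle groups — specifically proper intersection with faces — only works after restricting to a finitely generated subcomplex, and articulating cleanly why this suffices (and why the degree-$2$ bound plus Proposition \ref{degree-modulus} exactly accounts for the modulus loss from $2nY$ to $Y$) is the crux of the argument; the homotopy formula itself is then routine.
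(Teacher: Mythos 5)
Your overall architecture is the right one (interpolate between $\mathrm{id}$ and $\Phi^n$ over an extra cube coordinate, keep the degrees $\le 2$ so that Proposition \ref{degree-modulus} converts the modulus $2nY$ of the input into modulus $Y$ of the output, and work on a fixed finitely generated subcomplex), and the boundary bookkeeping you describe is the same as the paper's. But there is a genuine gap at the crucial step: you take the interpolating map to be the \emph{explicit} linear path $H^n_i=(1-s)y_i+s\,\Phi^n_i(\underline{y})$. For this fixed, non-generic map there is no reason that $(H^n)^{-1}(V)$ has the expected dimension, is equidimensional over $\square^n\times\mathbb{A}^1$ away from the boundary, or meets the faces of $\square^{n+1}$ properly --- the preimage of $V$ under an arbitrary polynomial self-map can have components of excess dimension, and then the cycle-theoretic pullback $h(V)$ is not even defined, let alone admissible. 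Your proposed fix, that ``properness holds generically'' once one restricts to finitely many generators, does not apply: there is nothing generic about the linear path, and shrinking the set of cycles does not improve the behaviour of a fixed map.

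The paper instead only prescribes the interpolating map $\tilde{\Phi}^n$ on the boundary $\overline{X}\times\partial\square^{n+1}$ (where it is forced to be $\mathrm{id}$, $\Phi^n$, or $\tilde{\Phi}^{n-1}$, all of degree $\le 2$) and then invokes Theorem \ref{suslin-degree} to choose, \emph{generically}, a degree-$\le 2$ extension to all of $\overline{X}\times\square^{n+1}$ whose preimage of $\cup_k V^n_k\times\mathbb{A}^1$ has fibers of dimension $\le r$ outside the boundary. This generic choice is exactly what guarantees that the pullbacks are defined, properly intersect all faces, and (as is needed later in Lemma \ref{zeromaps}) are equidimensional. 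It is also the true reason the statement is only a \emph{weak} homotopy: the extension $\tilde{\Phi}^\bullet$ depends on the finite collection $\{V^n_k\}$, so a single homotopy working for the whole complex is not produced. To repair your argument, replace the linear path by ``any extension of the glued boundary data supplied by Theorem \ref{suslin-degree} for the finite set of generators at hand''; the rest of your outline (degree bound, Proposition \ref{degree-modulus} with source $\square^{n+1}$, Containment Lemma for the faces, and the cube boundary formula) then goes through as in the paper.
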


\begin{proof}
To construct a weak homotopy as in the assertion, we fix a finite set of integral closed subschemes 
$\{V^{n}_{k}\}(\subset X\times \square^{n})\in {}_\Phi \underline{z}_r^-(\overline{X}|Y,n)$
which is closed under pullback along faces.
Denote by $C_{n}$ the free abelian group generated by $[V^{n}_{k}]$'s.
Then, we get a subcomplex $C_{\ast } \subset {}_\Phi \underline{z}_r^- (\overline{X}| Y, \bullet)$.
Since the subcomplexes of this form are cofinal in all finitely generated subcomplexes,
it suffices to prove that $(\varphi - \mathrm{incl.}|_{C_{\bullet}})$ is homotopic to zero.
For the proof, we construct a family of $\overline{X}$-morphisms $\tilde{\Phi}^{n}: \overline{X}\times \square^{n}\times \mathbb{A}^{1}\to \overline{X}\times \square^{n}\times \mathbb{A}^{1}$ which satisfies the following conditions:
\ 

(1) The following diagrams commute:
\begin{eqnarray*}\label{cd1}
\xymatrix{
\overline{X}\times \square^{n} \ar[d]_{i_{0}} \ar[r]^{\mathrm{id}} & \overline{X}\times \square^{n} \ar[d]^{i_{0}}\\
\overline{X}\times \square^{n} \ar[r]^{\tilde{\Phi}^{n}}\times \mathbb{A}^{1} & \overline{X}\times \square^{n}\times \mathbb{A}^{1},
}
\end{eqnarray*}
\begin{eqnarray*}\label{cd2}
\xymatrix{
\overline{X}\times \square^{n} \ar[d]_{i_{1}} \ar[r]^{\Phi^{n}} & \overline{X}\times \square^{n} \ar[d]^{i_{1}}\\
\overline{X}\times \square^{n} \ar[r]^{\tilde{\Phi}^{n}}\times \mathbb{A}^{1} & \overline{X}\times \square^{n}\times \mathbb{A}^{1},
}
\end{eqnarray*}
\begin{eqnarray*}\label{cd3}
\xymatrix{
\overline{X}\times \square^{n-1} \times \mathbb{A}^{1} \ar[d]_{1_{\overline{X}}\times s_{j}\times 1_{\mathbb{A}^{1}}} \ar[r]^{\tilde{\Phi}^{n-1}} & \overline{X}\times \square^{n-1} \times \mathbb{A}^{1}\ar[d]^{1_{\overline{X}}\times s_{j}\times 1_{\mathbb{A}^{1}}}\\
\overline{X}\times \square^{n} \ar[r]^{\tilde{\Phi}^{n}}\times \mathbb{A}^{1} & \overline{X}\times \square^{n}\times \mathbb{A}^{1}.
}
\end{eqnarray*}

(2)
Set
$Z:=(\square^{n}\times 0)+(\square^{n}\times 1)+\partial \square^{n} \times \mathbb{A}^{1} \subset \square^{n} \times \mathbb{A}^{1}$.
Then, for any point $z \in \square^{n} \times \mathbb{A}^{1}$ outside $Z$,
the dimension of the fiber over $z$ of the map 
$(\tilde{\Phi}^{n} )^{-1}(\cup_{k}V^{n}_{k}\times \mathbb{A}^{1})\subset \overline{X}\times \square^{n}\times \mathbb{A}^{1} \to \square^{n}\times \mathbb{A}^{1}$
is $\leq r$.

(3)
Every component of 
$(\tilde{\Phi}^{n} )^{-1}(V^{n}_{k} \times \mathbb{A}^{1})$
has modulus $Y$.

\ 

%
%
Given $\tilde{\Phi}^{\bullet}$ as above, we may define a homotopy $\sigma$ as 
$\sigma (V^n_k) :=(\tilde{\Phi}^{n} )^{\ast}(V^{n}_{k} \times \mathbb{A}^{1})$.

\

Now we construct $\tilde{\Phi}^{\bullet}$.
Actually each component of $\tilde{\Phi }^n$ will have degrees $\le 2$ in each variable $y_j$, which implies the condition (3) by Proposition \ref{degree-modulus} applied to $n'=n+1$.
Suppose we have constructed $\tilde{\Phi }^{n-1}$.
Via the isomorphism $\square ^n\times \mathbb{A}^1\cong \square ^{n+1}$, we have $Z\cong \partial \square ^{n+1}$.
Condition (1) for $\tilde{\Phi }^{n-1}$ implies that there is a glued $\overline{X}$-map
\[ \overline{X}\times Z \to \overline{X}\times \square ^n\times \mathbb{A}^1, \]
whose restrictions to codimension $1$ faces of $\square ^n\times \mathbb{A}^1\cong \square ^{n+1}$ are either $\mathrm{id}$, $\Phi^n$ or $\tilde{\Phi }^{n-1}$. By the induction hypothesis and Theorem \ref{suslin-modulus}, these are defined by polynomials whose degrees in $y_j$ are $\le 2$ for each $j$. 
Then by Theorem \ref{suslin-degree}, we obtain $\tilde{\Phi }^n$ having degrees $\le 2$ and satisfying (1)(2).
\end{proof}

\subsection{Proof of the comparison theorem}\label{proof-of-comparison}

Finally we can prove Theorem \ref{pro-comparison}.
In the following, we use the following abbreviations:
\[
C_{\bullet}^{Y}:=\underline{z}^{\mathit{equi}}_r (\overline{X}|Y,\bullet ), \ \ 
D_{\bullet}^{Y}:=\underline{z}_r(\overline{X}|Y,\bullet ),
\]
Let $f^Y : C_{\bullet}^Y \to D_{\bullet}^Y$ denote the natural inclusion

By Remark \ref{reduction}, it suffices to prove that 
$``\lim_{Y}" H_{n}C^{Y}_{\bullet} \stackrel{``\lim"H_n f^{Y}}{\longrightarrow} ``\lim_{Y}" H_{n}D^{Y}_{\bullet}$
is an isomorphism in the category of pro-abelian groups $\mathrm{pro}\mathchar`- \mathbf{Ab}$.
Since the functor $``\lim"$ is exact, 
the kernel and the cockerel of the map $``\lim"H_n f^{Y}$ is given by
$``\lim"\mathrm{Ker}(H_n f^{Y}), ``\lim"\mathrm{Coker}(H_n f^{Y})$.
We prove that these objects in 
$\mathrm{pro}\mathchar`- \mathbf{Ab}$ are the zero object.
Now we have the following elementary lemma:
\begin{lem}
An object $A=\{A^{\gamma}\}_{\gamma \in \Gamma} \in \mathrm{pro}\mathchar`- \mathbf{Ab}$ is the zero object if and only if for any $\gamma \in \Gamma$ there exists $\gamma' > \gamma$ such that the projection map $p^{\gamma'}_{\gamma}:A^{\gamma'}\to A^{\gamma}$ is the zero map.
\end{lem}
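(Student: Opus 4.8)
The plan is to prove this directly from the definition of morphisms and the zero object in a pro-category. Recall that for a filtered index category $\Gamma$, the object $A=\{A^\gamma\}_{\gamma}$ is zero in $\mathrm{pro}\text{-}\mathbf{Ab}$ precisely when the identity morphism $\mathrm{id}_A$ equals the zero morphism $0_A$ in $\mathrm{Hom}_{\mathrm{pro}\text{-}\mathbf{Ab}}(A,A) = \varprojlim_\gamma \varinjlim_{\gamma'} \mathrm{Hom}_{\mathbf{Ab}}(A^{\gamma'},A^\gamma)$. So the whole statement reduces to unwinding when two elements of this double (co)limit coincide.

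First I would spell out the forward direction ($\Rightarrow$). Assume $A$ is the zero object, so $\mathrm{id}_A = 0_A$. Fix $\gamma \in \Gamma$. The $\gamma$-component of $\mathrm{id}_A$, viewed in $\varinjlim_{\gamma'}\mathrm{Hom}(A^{\gamma'},A^\gamma)$, is represented by the transition map $p^{\gamma_0}_\gamma$ for any $\gamma_0 \ge \gamma$ (in particular $\gamma_0=\gamma$, the identity). The $\gamma$-component of $0_A$ is represented by the zero map $A^\gamma \to A^\gamma$. Equality of these two classes in the filtered colimit means: there exists $\gamma' \ge \gamma$ such that, after precomposing with the transition map $p^{\gamma'}_\gamma$, the two representatives agree as honest group homomorphisms $A^{\gamma'}\to A^\gamma$. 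But precomposing $p^\gamma_\gamma = \mathrm{id}$ with $p^{\gamma'}_\gamma$ gives exactly $p^{\gamma'}_\gamma$, while precomposing the zero map gives the zero map. Hence $p^{\gamma'}_\gamma = 0$, which is the desired conclusion. (If one wants $\gamma' > \gamma$ strictly rather than $\gamma'\ge\gamma$, use filteredness to enlarge $\gamma'$ and compose with a further transition map, which stays zero.)

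For the converse ($\Leftarrow$), suppose for every $\gamma$ there is $\gamma' > \gamma$ with $p^{\gamma'}_\gamma = 0$. Then in $\varinjlim_{\gamma'}\mathrm{Hom}(A^{\gamma'},A^\gamma)$ the class of $\mathrm{id}_{A^\gamma}$ equals the class of $0$, since both restrict to $p^{\gamma'}_\gamma = 0$ along $p^{\gamma'}_\gamma$. As this holds for each $\gamma$ compatibly, we get $\mathrm{id}_A = 0_A$ in $\mathrm{Hom}_{\mathrm{pro}\text{-}\mathbf{Ab}}(A,A)$; a category in which an object has $\mathrm{id} = 0$ makes that object a zero object (it is both initial and terminal, since any $f,g\colon A\to B$ satisfy $f = f\circ\mathrm{id}_A = f\circ 0_A = 0$, and dually). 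This finishes the proof. There is no real obstacle here — the only point requiring care is bookkeeping the filtered-colimit description of pro-morphisms and the distinction between $\gamma'\ge\gamma$ and $\gamma'>\gamma$, which filteredness handles.
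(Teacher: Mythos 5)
Your proof is correct. Note that the paper itself offers no proof of this lemma --- it is stated as an ``elementary lemma'' and used immediately --- so there is no argument to compare against; your job was to supply the standard one, and you did. The reduction to $\mathrm{id}_A = 0_A$ in $\mathrm{Hom}_{\mathrm{pro}\text{-}\mathbf{Ab}}(A,A) = \varprojlim_\gamma \varinjlim_{\gamma'} \mathrm{Hom}(A^{\gamma'},A^\gamma)$, the unwinding of equality in the filtered colimit to produce the index $\gamma'$ with $p^{\gamma'}_\gamma = 0$, and the observation that $\mathrm{id}_A = 0_A$ forces $A$ to be both initial and terminal are all exactly right. One pedantic caveat: the forward direction with the \emph{strict} inequality $\gamma' > \gamma$ can fail in the degenerate case where $\gamma$ is maximal in $\Gamma$ (then $A^\gamma = 0$ but no strictly larger index exists); your fix via filteredness works whenever $\Gamma$ has no maximal element, which holds for the index system $\{NY\}_{N\ge 1}$ actually used in the paper, so nothing is lost in the application.
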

Therefore, we are reduced to showing the following
\begin{lem}\label{zeromaps}
For any principal effective divisor $Y$ and $n\ge 0$, there exists $N>1$ such that the projections $\mathrm{Ker}(H_n f^{NY}) \to \mathrm{Ker}(H_n f^{Y})$ and $\mathrm{Coker}(H_n f^{NY}) \to \mathrm{Coker}(H_n f^{Y})$ are the zero maps.
\end{lem}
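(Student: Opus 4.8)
The plan is to use the weak homotopy constructed in Lemma \ref{weakhomotopy} to kill both the kernel and cokernel of $H_n f^Y$ after increasing the modulus by a bounded factor. Fix a principal $Y$, defined by some $u\in R$, and set $N:=2n+2$ (or whatever explicit bound the degree-$\le 2$ estimate forces through Proposition \ref{degree-modulus}; the point is only that it depends on $n$ alone). The key observation is the chain of inclusions of complexes
\[
C_\bullet^{NY}\subset {}_\Phi\underline{z}_r^-(\overline{X}|Y,\bullet)\subset D_\bullet^Y,
\]
where the left inclusion holds because an equidimensional cycle with modulus $NY$ is in particular a cycle with modulus $2nY$ and its pullback under $\Phi^\bullet$ stays equidimensional with modulus $Y$ by Theorem \ref{suslin-modulus} together with Proposition \ref{degree-modulus} — so it lies in ${}_\Phi\underline{z}_r^-$. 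On ${}_\Phi\underline{z}_r^-(\overline{X}|Y,\bullet)$ the two maps $(\Phi^\bullet)^*$ and the inclusion into $D_\bullet^Y$ agree up to the weak homotopy $\sigma$ of Lemma \ref{weakhomotopy}, and crucially $(\Phi^\bullet)^*$ factors through $C_\bullet^Y$ because $(\Phi^n)^{-1}(V)$ is equidimensional over $\square^n$ (again Theorem \ref{suslin-modulus}).

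First I would treat the cokernel. Take $\alpha\in H_n D_\bullet^{NY}$, represented by a cycle $z\in \underline z_r(\overline X|NY,n)_0$, say. Its class in $H_nD_\bullet^{Y}$ is the image of $\alpha$ under the projection. Since $z$ has modulus $NY\ge 2nY$, we may feed it into the construction of Theorem \ref{suslin-modulus}: the associated $\Phi^\bullet$ gives $(\Phi^n)^*z\in C_\bullet^Y$, and the weak homotopy (applied to the finitely generated subcomplex generated by the faces of $z$) shows $(\Phi^n)^*z$ and $z$ define the same class in $H_nD_\bullet^Y$. Hence $\alpha\mapsto[(\Phi^n)^*z]$ lands in the image of $H_nf^Y$, i.e. the projection $\mathrm{Coker}(H_nf^{NY})\to\mathrm{Coker}(H_nf^{Y})$ is zero. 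For the kernel: take $\beta\in\mathrm{Ker}(H_nf^{NY})$, represented by a cycle $w\in \underline z_r^{\mathit{equi}}(\overline X|NY,n)_0$ that becomes a boundary $w=\partial b$ in $D_\bullet^{NY}$ with $b\in \underline z_r(\overline X|NY,n+1)_0$. Now $b$ has modulus $NY\ge 2(n+1)Y$, so applying Theorem \ref{suslin-modulus} to the closed subset underlying $b$ produces a $\Phi^\bullet$ with $(\Phi^{n+1})^*b\in \underline z_r^{\mathit{equi}}(\overline X|Y,n+1)$ equidimensional; since $w$ is already equidimensional with modulus $NY\le 2nY$ it lies in ${}_\Phi\underline z_r^-$, and one checks (using that $\Phi^\bullet$ is built compatibly for $b$ and its faces, which include $w$ via $\partial b=w$ plus lower terms) that $(\Phi^n)^*w$ is homotopic to $w$ inside $C_\bullet^Y$ by the restriction of $\sigma$ — while $(\Phi^n)^*w=(\Phi^n)^*\partial b=\partial(\Phi^{n+1})^*b$ is a boundary in $C_\bullet^Y$. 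Therefore the image of $\beta$ in $\mathrm{Ker}(H_nf^Y)\subset H_nC_\bullet^Y$ vanishes.

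The main obstacle is bookkeeping the single system $\Phi^\bullet$ so that it serves simultaneously for a given cycle \emph{and} for a nullhomotopy witnessing it: in the cokernel argument one needs $\Phi^\bullet$ adapted to the boundary faces of $z$; in the kernel argument one needs a single $\Phi^\bullet$ adapted to $b$, which then automatically works for $w=\partial b$ because $\Phi^\bullet$ is face-compatible, but one must verify that $w$ (and all its faces) indeed lie in the finitely generated subcomplex on which Lemma \ref{weakhomotopy} furnishes an honest chain homotopy, not merely a weak one. This is where the hypothesis that everything in sight has modulus $NY$ with $N$ large enough to absorb the degree-$2$ blow-up in $\square^{n+1}=\square^n\times\mathbb A^1$ is used, via Proposition \ref{degree-modulus} with $n'=n+1$. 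Once the indices $2n$, $2(n+1)$ are tracked honestly, choosing $N$ to dominate $2(n+1)$ suffices, and the two zero-map statements follow as above, completing the proof of Lemma \ref{zeromaps} and hence of Theorem \ref{pro-comparison}.
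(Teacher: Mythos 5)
Your proposal is correct and follows essentially the same route as the paper: take $N=2n+2$, kill the cokernel by replacing a cycle $z$ of modulus $NY$ with $(\Phi^n)^*z\in C_\bullet^Y$ via the weak homotopy, and kill the kernel by writing $w=\partial(\Phi^{n+1})^*b-d\sigma_n w$ and observing that $\sigma_n w=(\tilde\Phi^n)^*(w\times\mathbb{A}^1)$ is equidimensional because $w$ is. (Only a cosmetic slip: ``modulus $NY\le 2nY$'' should read that modulus $NY$ implies modulus $2nY$ since $N\ge 2n$.)
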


\begin{proof}

We firstly prove that $\mathrm{Coker}(H_n f^{2nY}) \to \mathrm{Coker}(H_n f^{Y})$ is the zero map for any $n \geq 0$.
Take arbitrary element $W \in H_{n}D_{\bullet}^{2nY}$.
\marginpar{\tiny \color{blue} }
Applying Lemma \ref{weakhomotopy} for $\Phi$ given in Theorem \ref{suslin-modulus} with respect to $W$,  there exists $V \in C_{n}^{Y}$ such that $\mathrm{pr}^{2nY}_{Y} W = f^Y V$ holds in $H_{n}D^{Y}_{\bullet}$.
This means that $(H_{n}D_{\bullet}^{2nY} \twoheadrightarrow ) \mathrm{Coker}(H_n f^{2nY}) \to \mathrm{Coker}(H_n f^{Y})$ is the zero map.

\ 

Next we prove that $\mathrm{Ker}(H_n f^{(2n+2)Y}) \to \mathrm{Ker}(H_n f^{Y})$ is the zero map.
Take $V \in \mathrm{Ker}(H_n f^{(2n+2)Y}) \subset H_n C_{\bullet}^{(2n+2)Y}$ arbitrarily.
In the following, we regard $V$ as an element of $C_{n}^{(2n+2)Y} \stackrel{f^{(2n+2)Y}}{\hookrightarrow} D_{n}^{(2n+2)Y}$.
Then, there exists $W \in D_{n+1}^{(2n+2)Y}$ such that $V = dW$ holds in $D_{n}^{(2n+2)Y}$.
It suffices to show that 
$\mathrm{pr}^{(2n+2)Y}_Y dW$ belongs to $dC_{n+1}^{Y}$.

Let $\Phi$ be the morphism as in Theorem \ref{suslin-modulus} corresponding to $W$.
Since we have $(\Phi^{n+1} )^\ast W \in C_{n+1}^{Y}$ by Theorem \ref{suslin-modulus},
it is equivalent to verify $d(\Phi^{n+1} )^\ast W - dW \in dC_{n+1}^{Y}$.
This element can be rewritten as 
\begin{align*}
&d((\Phi^{n+1} )^\ast - \mathrm{incl.}) W \\
&=d(d\sigma_{n+1} - \sigma_n d) W \\
&= -d\sigma_n dW \\
&= -d(\tilde{\Phi}^n )^\ast (d(W) \times \mathbb{A}^1),
\end{align*}
where  $\tilde{\Phi}$ and $\sigma$ are defined in the proof of Lemma \ref{weakhomotopy}.
By construction of $\tilde{\Phi}$, we can see that $(\tilde{\Phi}^n )^\ast (d(W) \times \mathbb{A}^1)$ is equidimesional.
Therefore, the right hand side of the equations belongs to $dC_{n+1}^Y$, which proves the desired assertion.
\end{proof}

\subsection{A consequence on the relative motivic cohomologies}
We can naturally sheafify our objects and consider the inclusion
\[ z^{\mathit{equi}}_r (\overline{X}|Y,\bullet )_{\mathrm{Zar}}\subset z_r(\overline{X}|Y,\bullet )_{\mathrm{Zar}}  \]
of Zariski sheaves of complexes on $\overline{X}$. The induced maps on homology sheaves
\[ ``\lim _Y"H_n(z^{\mathit{equi}}_r (\overline{X}|Y,\bullet )_{\mathrm{Zar}})
\xrightarrow{``\lim " f_n^Y}
``\lim _Y" \mathrm{CH}_r(\overline{X}|Y,n )_{\mathrm{Zar}} \]
are pro-isomorphisms of Zariski sheaves for all $n$. Indeed, by Lemma \ref{zeromaps}, the maps of sheaves
\[ \mathrm{Coker}(f_n^{2nY})\to \mathrm{Coker}(f_n^{Y})   \]
\[ \mathrm{Ker}(f_n^{(2n+2)Y})\to \mathrm{Ker}(f_n^{Y})   \]
are zero.

As a general fact on pro-categories, the functors $H^n_{\mathrm{Zar}}(\overline{X},-)$ extend to functors
\[ \begin{array}{ccc}
\text{pro-sheaves}&\to &\text{pro-abelian groups}\\
``\lim _i" F_i &\mapsto &``\lim _i"H^n_{\mathrm{Zar}}(\overline{X},F_i).
\end{array}  \]
We have hypercohomology spectral sequences in the abelian category of pro-abelian groups:
\begin{gather*}
E_2^{pq}=``\lim _Y"H^p_{\mathrm{Zar}}(\overline{X},H_{-q}(z^{\mathit{equi}}_r (\overline{X}|Y,\bullet )_{\mathrm{Zar}})) \Rightarrow ``\lim _Y"\mathbf{H}^{p+q}_{\mathrm{Zar}}(\overline{X},z^{\mathit{equi}}_r (\overline{X}|Y,\bullet )_{\mathrm{Zar}})
\\%
'E_2^{pq}=``\lim _Y"H^p_{\mathrm{Zar}}(\overline{X},CH_r(\overline{X}|Y,-q)_{\mathrm{Zar}})) \Rightarrow ``\lim _Y"\mathbf{H}^{p+q}_{\mathrm{Zar}}(\overline{X},z_r(\overline{X}|Y,\bullet )_{\mathrm{Zar}})
\end{gather*}
which are bounded to the range
$0\le p\le \dim \overline{X}$ and $q\le 0$.
Since the natural map
$E\to {}'E$ of spectral sequences induces isomorphisms on $E_2$-terms, we get isomorphisms
\[``\lim _Y" \mathbf{H}^{n}_{\mathrm{Zar}}(\overline{X},z^{\mathit{equi}}_r (\overline{X}|Y,\bullet )_{\mathrm{Zar}})\to 
``\lim _Y" \mathbf{H}^{n}_{\mathrm{Zar}}(\overline{X},z_r(\overline{X}|Y,\bullet )_{\mathrm{Zar}}).\]
So we have proved:
\begin{thm}%
For any algebraic scheme $\overline{X}$ and an effective Cartier divisor $Y_0$ on $\overline{X}$,
the natural maps of pro-abelian groups
\[``\lim _{Y}\text{'' }\mathbf{H}^{n}_{\mathrm{Zar}}(\overline{X},z^{\mathit{equi}}_r (\overline{X}|Y,\bullet )_{\mathrm{Zar}})\to 
``\lim _{Y}\text{'' } \mathbf{H}^{n}_{\mathrm{Zar}}(\overline{X},z_r(\overline{X}|Y,\bullet )_{\mathrm{Zar}}) \]
are isomorphisms, where $Y$ runs through effective Cartier divisors with support $|Y_0|$.
\end{thm}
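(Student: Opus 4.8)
The plan is to deduce this global statement from Theorem~\ref{pro-comparison} --- more precisely, from the uniform vanishing bounds of Lemma~\ref{zeromaps} --- by Zariski descent, carrying out the homological algebra inside the abelian category $\mathrm{pro}\text{-}\mathbf{Ab}$ of pro-abelian groups. First I would upgrade Lemma~\ref{zeromaps} to an assertion about Zariski sheaves on $\overline{X}$; then I would feed the resulting pro-isomorphism of homology sheaves into a comparison of the two hypercohomology spectral sequences.

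For the first step: for every affine open $U\subseteq\overline{X}$ the sections $z^{\mathit{equi}}_r(\overline{X}|Y,\bullet)_{\mathrm{Zar}}(U)=z^{\mathit{equi}}_r(U|Y|_U,\bullet)$ and $z_r(\overline{X}|Y,\bullet)_{\mathrm{Zar}}(U)=z_r(U|Y|_U,\bullet)$ satisfy the hypotheses of \S\ref{proof-of-comparison}, so Lemma~\ref{zeromaps} applies verbatim: the transition maps $\mathrm{Coker}(H_nf^{2nY})(U)\to\mathrm{Coker}(H_nf^{Y})(U)$ and $\mathrm{Ker}(H_nf^{(2n+2)Y})(U)\to\mathrm{Ker}(H_nf^{Y})(U)$ vanish. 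The crucial feature is that the bounds $2n$ and $2n+2$ do not depend on $U$. Since a morphism of Zariski sheaves that vanishes on sections over every affine open is zero, the sheaf maps $\mathrm{Coker}(f_n^{2nY})\to\mathrm{Coker}(f_n^{Y})$ and $\mathrm{Ker}(f_n^{(2n+2)Y})\to\mathrm{Ker}(f_n^{Y})$ are zero. Exactness of $``\lim _Y"$ then forces the pro-sheaves $``\lim _Y"\mathrm{Coker}(f_n^{Y})$ and $``\lim _Y"\mathrm{Ker}(f_n^{Y})$ to vanish, so that
\[ ``\lim _Y"H_n\bigl(z^{\mathit{equi}}_r(\overline{X}|Y,\bullet)_{\mathrm{Zar}}\bigr)\longrightarrow ``\lim _Y"\mathrm{CH}_r(\overline{X}|Y,n)_{\mathrm{Zar}} \]
is an isomorphism of pro-sheaves for every $n$.

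For the second step I would use that the additive functors $H^p_{\mathrm{Zar}}(\overline{X},-)$ extend to functors from pro-sheaves to pro-abelian groups, sending $``\lim _i"F_i$ to $``\lim _i"H^p_{\mathrm{Zar}}(\overline{X},F_i)$, and that such extensions carry pro-isomorphisms to pro-isomorphisms. Applying this to the first step identifies the $E_2$-pages of the two hypercohomology spectral sequences displayed above the statement. Both are objects of the abelian category $\mathrm{pro}\text{-}\mathbf{Ab}$ --- obtained by applying the exact functor $``\lim _Y"$ to the $Y$-indexed systems of ordinary hypercohomology spectral sequences --- and both are bounded: $H^p_{\mathrm{Zar}}(\overline{X},-)=0$ for $p>\dim\overline{X}$ by Grothendieck's vanishing theorem (applied objectwise, hence in the pro-category), while the homology-sheaf index $-q$ is $\ge 0$. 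The comparison theorem for bounded spectral sequences, valid in any abelian category, therefore yields the isomorphism of abutments
\[ ``\lim _Y"\mathbf{H}^{n}_{\mathrm{Zar}}(\overline{X},z^{\mathit{equi}}_r(\overline{X}|Y,\bullet)_{\mathrm{Zar}})\xrightarrow{\ \sim\ }``\lim _Y"\mathbf{H}^{n}_{\mathrm{Zar}}(\overline{X},z_r(\overline{X}|Y,\bullet)_{\mathrm{Zar}}), \]
which is the claim.

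I expect the main obstacle to be formal rather than geometric: one must be confident that $\mathrm{pro}\text{-}\mathbf{Ab}$ is abelian with $``\lim"$ exact, that the hypercohomology spectral sequence of a bounded complex of Zariski sheaves can be formed functorially in the pro-system in $Y$ so that its $E_2$-page is literally the pro-object written above, and that convergence and the comparison of bounded spectral sequences go through in this abelian category; all of this is standard but deserves precise references. On the geometric side there is nothing new to do --- the entire input is Suslin's moving lemma with modulus, as packaged in Lemma~\ref{zeromaps}. I would also remark that for non-affine $\overline{X}$ the group $\mathbf{H}^{n}_{\mathrm{Zar}}(\overline{X},z_r(\overline{X}|Y,\bullet)_{\mathrm{Zar}})$ is the Zariski-local --- hence ``correct'' --- higher Chow group with modulus and need not coincide with $H_nz_r(\overline{X}|Y,\bullet)$; the theorem is stated, and this argument proves it, for that hypercohomological version.
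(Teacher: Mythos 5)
Your proposal is correct and follows essentially the same route as the paper: sheafify the uniform vanishing bounds of Lemma~\ref{zeromaps} (the key point being that the indices $2n$ and $2n+2$ are independent of the affine open) to get a pro-isomorphism of homology sheaves, then compare the two bounded hypercohomology spectral sequences in the abelian category of pro-abelian groups. The extra care you take with the formal pro-categorical points and with Grothendieck vanishing only makes explicit what the paper leaves implicit.
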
%

\addcontentsline{toc}{section}{References}

\end{document}